\documentclass[11pt]{article}

\textheight=8.2in \topmargin=0in \textwidth=6.2in
\oddsidemargin=0.28in

\usepackage{amsmath}
\usepackage{latexsym}
\usepackage{amssymb}
\usepackage{graphicx}
\numberwithin{equation}{section}

\newtheorem{theorem}{\bf Theorem}[section]
\newtheorem{definition}{\bf Definition}[section]
\newtheorem{lemma}{\bf Lemma}[section]


\def\udots{\mathinner{\mkern1mu\raise-1pt\vbox{\kern7pt\hbox{.}}\mkern2mu
    \raise2pt\hbox{.}\mkern2mu\raise5pt\hbox{.}\mkern1mu}}


\allowdisplaybreaks

\begin{document}
\begin{center}
{\Large \bf Absolute Continuity of the Laws of Perturbed Diffusion Processes and Perturbed Reflected Diffusion Processes}
\end{center}
\begin{center}
Wen Yue, Tusheng Zhang
\end{center}
\begin{center}
{\scriptsize Department of Mathematics, University of Manchester, Oxford Road, Manchester M13 9PL, England, UK}
\end{center}
\begin{abstract}
In this paper, we prove that the laws of perturbed diffusion processes and perturbed reflected diffusion processes are absolutely continuous with respect to the Lebesgue measure. The main tool we use is the Malliavin calculus.t
\end{abstract}

{\it Keywords:} Perturbed diffusion processes;  Perturbed reflected diffusion processes;  Malliavin differentiability; Absolute continuity; Comparison theorem.

\section{Introduction}
       Let $(\Omega, \mathcal{F}, \{\mathcal{F}_{t}\}_{t\geq 0},P)$ be a filtered probability space with filtration$\{\mathcal{F}_{t}\}_{t\geq 0}$ satisfying the usual conditions. $\{B_{t}\}_{t\geq0}$ is a one-dimensional standard $\{\mathcal{F}_{t}\}_{t\geq 0}$-Brownian Motion. Suppose that $\sigma(x),b(x)$ are Lipschitz continuous functions on R. There now exists a considerable body of literature devoted to the study of perturbed stochastic differential equations(SDEs), see e.g. [1-4], [6-8], [11], [13]. It was proved in \cite{RDZ} that the following perturbed SDE:
       \begin{equation}
       Y_{t}=y_{0}+\int_{0}^{t}\sigma(Y_{s})dB_{s}+\int_{0}^{t}b(Y_{s})ds+\alpha \max_{0\leq s\leq t} Y_{s}. \label{perturbed diffusion}
       \end{equation}
       and the perturbed reflected SDE:
       \begin{equation}
      \left\{ \begin{array}{lll}
      X_{t}=\int_{0}^{t}\sigma(X_{s})dB_{s}+\int_{0}^{t}b(X_{s})ds+\alpha \max\limits_{0\leq s \leq t}X_{s}+L_{t}\\
      X_t\geq 0\\
      \int_0^t\chi_{\{X_s=0\}}dL_s=L_t.\end{array}\right. \label{reflected PD}
      \end{equation}
      admit unique solutions. Perturbed Brownian motion arose in a study of the windings of planar Brownian motion, see \cite{GY1}. Perturbed diffusion processes are also continuous versions of self-interacting random walks.
      \vskip 0.3cm
       The purpose of this paper is to establish the absolute continuity of the laws of perturbed diffusion processes as well as perturbed reflected diffusion processes under appropriate  conditions. The absolute continuity of the laws of the solutions is of fundamental importance both theoretically and numerically.
      The absolute continuity of the laws of the solutions to stochastic differential equations has been studied by many people. We refer the reader to the books \cite{N}, \cite{S} and references therein.
       \vskip 0.3cm
       The tool we use is naturally Malliavin Calculus. Because the extra terms in equation (\ref{perturbed diffusion}) and (\ref{reflected PD}) involve the maximum of the solution itself, the Malliavin differentiability of the solutions becomes very delicate. For the absolute continuity of the laws of the solutions, we need a careful analysis of the time points where the solution $X$ reaches its maximum. The local property of the Malliavin derivative and a comparison theorem for stochastic differential equations play a crucial role.

       \vskip 0.3cm
       This paper is organized as follows. In Section 2, we collect some results of Malliavin  calculus to be used later in the paper. In Section 3, we prove that the perturbed diffusion process is Malliavin differentiable
       and establish the absolute continuity of the laws of the perturbed diffusion processes.  In Section 4, we study the reflected perturbed diffusion processes. The Malliavin differentiability and the absolute continuity of the solutions are obtained.

\section{Preliminaries}
In this section, we collect some results on Malliavin calculus which will be used in the paper.\\
Let $\Omega=C_{0}(R_{+})$ be the space of continuous functions on $R_{+}$ which are zero at zero. Denote by $\mathcal{F}$ the Borel $\sigma$-field on $\Omega$ and P the Wiener measure. Then the canonical coordinate process \{$B_{t}$, $t\in R_{+}$\} on $\Omega$ is a Brownian motion. Define $\mathcal{F}_{t}^{0}=\sigma(B_{s},s\leq t).$ Denote by $\mathcal{F}_{t}$ the completion of $\mathcal{F}_{t}^{0}$ with respect to the $P$-null sets of $\mathcal{F}.$ Let $h\in L^{2}(R_{+}).$ $W(h)$ will stand for the Wiener integral as follows:
\begin{equation}
W(h)=\int_{0}^{\infty}h(t)dB_{t}.
\end{equation}
$\{W(h),h\in H\}$ is a Gaussian Process on $H:=L^{2}(R_{+},\mathcal{B},\mu)$, where $(R_{+},\mathcal{B})$ is a measurable space, $\mathcal{B}$ is the Borel sigma field of $R_{+}$ and $\mu$ is the Lebesgue measure on $R_{+}$.

We denote by $C_{p}^{\infty}(\mathbb{R}^{n})$ the set of all infinitely continuously differentiable functions $f:\mathbb{R}^{n}\rightarrow\mathbb{R}$ such that $f$ and all of its partial derivatives have polynomial growth.
Let $S$ be the set of smooth random variables defined by
\begin{equation}
S=\{F=f(W(h_{1}),W({h_{2}}),...,W(h_{n}));\ h_{1},...,h_{n}\in L^{2}(R_{+}),n\geq 1, f\in C_{p}^{\infty}(R^{n})\}.
\end{equation}
Let $F\in S$. Define its Malliavin derivative $D_{t}F$ by
\begin{equation}
D_{t}F=\Sigma_{i=1}^{n}\partial_{i}f(W(h_{1}),W(h_{2}),...,W(h_{n}))h_{i}(t),
\end{equation}
and its norm by
\begin{equation}
||F||_{1,2}=[E(|F|^{2})+E(||DF||_{H}^{2})]^{\frac{1}{2}}.
\end{equation}
Let $\mathbb{D}^{1,2}$ be the completion of $S$ under the norm $||.||_{1,2}.$
The following result is from \cite{N}.
\begin{theorem}
Let $F\in \mathbb{D}^{1,2}.$ If $||DF||_{H}>0$ a.s., then the law of the random variable F is absolutely continuous with respect to Lebesgue measure.
\end{theorem}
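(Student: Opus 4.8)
The plan is to derive the statement from the chain rule for the Malliavin derivative together with the integration-by-parts (duality) formula relating $D$ to its adjoint, the divergence operator $\delta$. Recall that absolute continuity of the law $\mu_F=P\circ F^{-1}$ is equivalent to the property $\mu_F(B)=0$ for every Borel set $B\subseteq\IR$ of Lebesgue measure zero. Since $\|DF\|_H>0$ almost surely, I would first reduce the whole problem to the weighted identity $E[\mathbf 1_B(F)\,\|DF\|_H^2]=0$ for each such $B$: the integrand is nonnegative and $\|DF\|_H^2>0$ a.s., so a vanishing expectation forces $\mathbf 1_B(F)=0$ a.s., i.e. $P(F\in B)=0$. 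This is precisely the point at which the hypothesis $\|DF\|_H>0$ enters in an essential way.

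To prove the weighted identity I would approximate $\mathbf 1_B$ from above by smooth functions of small total mass. Choose open sets $G_n\supseteq B$ with $\mathrm{Leb}(G_n)\to 0$, and pick $g_n\in C^\infty(\IR)$ with $0\le g_n\le 1$, $g_n\ge\mathbf 1_B$, $\mathrm{supp}\,g_n\subseteq G_n$, so that $\varepsilon_n:=\int_\IR g_n(y)\,dy\le\mathrm{Leb}(G_n)\to 0$. Setting $\psi_n(x)=\int_{-\infty}^x g_n(y)\,dy$, the primitive $\psi_n$ is nondecreasing, Lipschitz, and, crucially, \emph{uniformly small}: $0\le\psi_n\le\varepsilon_n$, so $\|\psi_n\|_\infty\to 0$. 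By the chain rule for $D$ (valid for $C^1$ functions with bounded derivative applied to $\mathbb D^{1,2}$ variables), $\psi_n(F)\in\mathbb D^{1,2}$ with $D(\psi_n(F))=g_n(F)\,DF$, whence
\[
E\big[g_n(F)\,\|DF\|_H^2\big]=E\big[\langle D(\psi_n(F)),DF\rangle_H\big].
\]

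The heart of the argument is to bound the right-hand side by a fixed multiple of $\|\psi_n\|_\infty$, uniformly in $n$. Formally this is the duality relation $E[\langle D(\psi_n(F)),DF\rangle_H]=E[\psi_n(F)\,\delta(DF)]$, which would immediately give the bound $\|\psi_n\|_\infty\,E|\delta(DF)|\to 0$. The difficulty, and what I expect to be the \emph{main obstacle}, is that under the bare hypothesis $\|DF\|_H>0$ a.s. one has no control on $\delta(DF)$; indeed $DF$ need not lie in the domain of $\delta$. I would therefore localize on the events $\Omega_k=\{\|DF\|_H^2>1/k\}$, which increase to a set of full measure: on each $\Omega_k$ one truncates and regularizes $F$ so that the divergence is well defined and bounded below, establishes the integration-by-parts estimate there with a constant independent of $n$, and then lets $k\to\infty$. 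Making this rigorous, controlling the error terms produced by the cutoffs and ensuring the bounding constant does not blow up, is the delicate point; it is exactly the mechanism underlying the Bouleau–Hirsch criterion.

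Granting the uniform estimate $|E[g_n(F)\,\|DF\|_H^2]|\le C\,\|\psi_n\|_\infty\to 0$, the conclusion follows at once: since $g_n\ge\mathbf 1_B$ and $g_n(F)\,\|DF\|_H^2\ge 0$, we obtain $0\le E[\mathbf 1_B(F)\,\|DF\|_H^2]\le E[g_n(F)\,\|DF\|_H^2]\to 0$, hence $E[\mathbf 1_B(F)\,\|DF\|_H^2]=0$. By the reduction of the first paragraph this yields $P(F\in B)=0$ for every Lebesgue-null $B$, which is the asserted absolute continuity.
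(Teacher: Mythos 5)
First, a remark on the comparison itself: the paper does not prove this statement. Theorem 2.1 is quoted directly from Nualart \cite{N} (it is the one-dimensional Bouleau--Hirsch criterion, Theorem 2.1.3 there), so there is no in-paper proof to measure you against. That said, your opening moves are exactly the standard ones and are correct: the reduction to the weighted identity $E[\mathbf{1}_B(F)\,\|DF\|_H^2]=0$, the choice of $g_n$ and of the primitive $\psi_n$ with $0\le\psi_n\le\varepsilon_n\to 0$, and the chain-rule identity $E[g_n(F)\,\|DF\|_H^2]=E[\langle D(\psi_n(F)),DF\rangle_H]$ (modulo the routine reduction to compact $B$ so that a continuous $g_n\ge\mathbf{1}_B$ supported in $G_n$ actually exists).

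The gap is in the step you yourself flag as the main obstacle, and it is a genuine one. The duality $E[\langle D(\psi_n(F)),DF\rangle_H]=E[\psi_n(F)\,\delta(DF)]$ is unavailable because $DF$ need not lie in $\mathrm{Dom}\,\delta$, and the proposed repair --- truncating and regularizing $F$ on $\Omega_k=\{\|DF\|_H^2>1/k\}$ so as to obtain an integration-by-parts bound with a constant uniform in $n$ --- is not carried out and does not obviously go through: under the bare hypotheses $F\in\mathbb{D}^{1,2}$ and $\|DF\|_H>0$ a.s.\ there is no control on the divergence of any natural truncation, which is precisely why the Bouleau--Hirsch proof avoids $\delta$ altogether. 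The correct finish is much softer and needs no new estimate: since $\|\psi_n\|_\infty\le\varepsilon_n\to 0$, one has $\psi_n(F)\to 0$ in $L^2(\Omega)$, while $\|D(\psi_n(F))\|_H=|g_n(F)|\,\|DF\|_H\le\|DF\|_H$ gives $\sup_n E\|D(\psi_n(F))\|_H^2\le E\|DF\|_H^2<\infty$; by the closability of $D$ (Lemma 1.2.3 in \cite{N}, the same lemma this paper invokes at the end of Theorems 3.1 and 4.1) it follows that $D(\psi_n(F))\to D(0)=0$ weakly in $L^2(\Omega;H)$, hence $E[\langle D(\psi_n(F)),DF\rangle_H]\to 0$ because $DF$ is a fixed element of $L^2(\Omega;H)$. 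This yields $E[g_n(F)\,\|DF\|_H^2]\to 0$ with no reference to $\delta$ and no uniform constant to chase, and the remainder of your argument then closes the proof as you wrote it.
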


\section{Absolute continuity of the laws of perturbed diffusion processes}

Let $\sigma(x),b(x)$ be Lipschitz continuous functions on R, i.e., there exists a constant C such that
 \begin{equation}
  |\sigma(x)-\sigma(y)| \leq C|x-y|,
  \end{equation}
  \begin{equation}
   |b(x)-b(y)| \leq C|x-y|.
   \end{equation}
   For $\alpha < 1$, $y_{0}\in R$, consider the following stochastic differential equation:
    \begin{equation}
   Y_{t}=y_{0}+\int_{0}^{t}\sigma(Y_{s})dB_{s}+\int_{0}^{t}b(Y_{s})ds+\alpha \max_{0\leq s\leq t} Y_{s}. \label{eq1}
\end{equation}
It was shown in \cite{RDZ} that equation (\ref{eq1}) admits a unique, continuous, adapted solution. We have the following result.
\begin{theorem}
 Let $Y_{t}$ be the unique solution to equation (\ref{eq1}). Then $Y_{t} \in \mathbb{D}^{1,2}$ for any $t\geq 0$.
\end{theorem}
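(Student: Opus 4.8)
The plan is to approximate $Y$ by Picard iterates, verify membership in $\mathbb{D}^{1,2}$ along the scheme, and then pass to the limit using the closedness of the derivative operator $D$. Define $Y^{(0)}_t\equiv y_0$ and
\[
Y^{(n+1)}_t=y_0+\int_0^t\sigma(Y^{(n)}_s)\,dB_s+\int_0^t b(Y^{(n)}_s)\,ds+\alpha\max_{0\le s\le t}Y^{(n)}_s.
\]
From the existence proof in \cite{RDZ} these converge to $Y$ in $L^2(\Omega;C([0,T]))$, so in particular $\sup_n E[\sup_{t\le T}|Y^{(n)}_t|^2]<\infty$. The induction hypothesis at level $n$ is that $Y^{(n)}_t\in\mathbb{D}^{1,2}$ for every $t$, that $t\mapsto Y^{(n)}_t$ is continuous, and that $\sup_{t\le T}\|Y^{(n)}_t\|_{1,2}<\infty$.

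For the induction step I would differentiate the three terms separately. The two integral terms are handled by the standard rules for the Malliavin derivative of It\^o and Lebesgue integrals together with the chain rule for the Lipschitz functions $\sigma,b$ (Nualart, Prop. 1.2.4), which produces bounded measurable proxies $\sigma',b'$ with $|\sigma'|,|b'|\le C$. The delicate term is $\max_{0\le s\le t}Y^{(n)}_s$. Here I would invoke the differentiability of the supremum of a continuous family in $\mathbb{D}^{1,2}$: approximating the running maximum by maxima over finite dense grids and using the chain rule for the Lipschitz function $\max$, one gets that $\max_{0\le s\le t}Y^{(n)}_s\in\mathbb{D}^{1,2}$ with the pointwise bound
\[
\Big|D_r\max_{0\le s\le t}Y^{(n)}_s\Big|\le\sup_{0\le s\le t}|D_rY^{(n)}_s|,
\]
and, on the event that the maximum is attained at a unique time $\tau^{(n)}_t$, the identity $D_r\max_{0\le s\le t}Y^{(n)}_s=D_rY^{(n)}_{\tau^{(n)}_t}$. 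This yields the linear equation
\[
D_rY^{(n+1)}_t=\sigma(Y^{(n)}_r)+\int_r^t\sigma'(Y^{(n)}_s)D_rY^{(n)}_s\,dB_s+\int_r^t b'(Y^{(n)}_s)D_rY^{(n)}_s\,ds+\alpha D_r\max_{0\le s\le t}Y^{(n)}_s
\]
for $r\le t$, and $D_rY^{(n+1)}_t=0$ for $r>t$.

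The core of the argument is a uniform estimate. Writing $h^{(n)}_r(t)=\sup_{r\le u\le t}|D_rY^{(n)}_u|$ and using the bound on the derivative of the maximum, the running supremum of the martingale term being controlled by Doob's and the Burkholder--Davis--Gundy inequalities, and a weighted Young inequality $(a+b)^2\le(1+\varepsilon)a^2+(1+\varepsilon^{-1})b^2$ to isolate the $\alpha$-contribution, one obtains
\[
E\big[h^{(n+1)}_r(t)^2\big]\le\theta\,E\big[h^{(n)}_r(t)^2\big]+A+B\int_r^t E\big[h^{(n)}_r(s)^2\big]\,ds,
\]
with $\theta=(1+\varepsilon)\alpha^2$. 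The point is that, since $\alpha^2<1$, one may choose $\varepsilon>0$ so small that $\theta<1$; this is exactly where the hypothesis $\alpha<1$ enters. Integrating in $r$ and applying Gronwall's lemma to $\Phi^{(n)}(t)=\int_0^tE[h^{(n)}_r(t)^2]\,dr$ shows $\sup_n\Phi^{(n)}(t)<\infty$, hence $\sup_n\sup_{t\le T}\|Y^{(n)}_t\|_{1,2}<\infty$.

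Since $Y^{(n)}_t\to Y_t$ in $L^2(\Omega)$ while the derivatives $DY^{(n)}_t$ stay bounded in $L^2(\Omega;H)$, the closedness of $D$ yields $Y_t\in\mathbb{D}^{1,2}$, completing the proof. The main obstacle is the running-maximum term: one must justify its Malliavin differentiability, establish the pointwise derivative bound (and, for later use, the a.s.\ uniqueness of the argmax $\tau^{(n)}_t$ so that the identification $D_r\max=D_rY_{\tau}$ holds), and---most critically---absorb the resulting $\alpha$-term into the left-hand side of the estimate, which succeeds precisely because $|\alpha|<1$.
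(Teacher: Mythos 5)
Your overall strategy (Picard iteration, Malliavin differentiability of each iterate, uniform bound on $E\|DY^{(n)}_t\|_H^2$, closability of $D$) is the same as the paper's, but the way you treat the maximum term is genuinely different, and this is where the two arguments diverge in substance. The paper keeps the implicit scheme $Y^{n+1}_t=y_0+\int_0^t\sigma(Y^n_s)dB_s+\int_0^t b(Y^n_s)ds+\alpha\max_{0\le s\le t}Y^{n+1}_s$ and \emph{solves the max out explicitly}, rewriting the iterate as $Y^{n+1}_t=\frac{y_0}{1-\alpha}+Z^n_t+X^n_t+\frac{\alpha}{1-\alpha}\max_{0\le s\le t}(Z^n_s+X^n_s)$ with $Z^n,X^n$ depending only on $Y^n$. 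The maximum is then a maximum of a process already known (by induction) to be differentiable, Nualart's Proposition 2.1.10 applies directly, and the resulting recursion for $E\|DY^n_t\|_H^2$ contains only \emph{integrated} contributions from level $n-1$; iterating it converges for every $\alpha<1$ with no smallness or absorption argument. You instead use the fully explicit scheme with $\alpha\max_{0\le s\le t}Y^{(n)}_s$ and absorb the resulting term via the weighted Young inequality with $\theta=(1+\varepsilon)\alpha^2<1$. This is workable (it is essentially what the paper itself does in Section 4 for the reflected case), but it costs you two things. First, $\theta<1$ requires $|\alpha|<1$, whereas the theorem is stated for all $\alpha<1$ (the paper places no lower bound on $\alpha$, and its $\frac{\alpha}{1-\alpha}$ device handles arbitrarily negative $\alpha$); as written your proof does not cover $\alpha\le-1$. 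Second, the convergence $Y^{(n)}\to Y$ that you attribute to \cite{RDZ} is proved there for the implicit-max scheme, not for yours; your scheme needs its own contraction estimate, which again forces $|\alpha|<1$. Two smaller points: the sup-differentiability lemma needs $E[\sup_s\|DY^{(n)}_s\|_H^2]<\infty$, not merely $\sup_s\|Y^{(n)}_s\|_{1,2}<\infty$, so that stronger bound must be carried through the induction (your $h^{(n)}$, $\Phi^{(n)}$ do control it, but your stated induction hypothesis does not); and the identification $D_r\max Y^{(n)}=D_rY^{(n)}_{\tau}$ at a unique argmax is neither proved nor needed here, so it is best omitted from this theorem. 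If you restrict to $|\alpha|<1$ and supply the convergence of your scheme, the argument goes through; to get the full range $\alpha<1$ you should adopt the paper's explicit resolution of the maximum.
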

\begin{proof}
 Consider Picard approximations given by
  \begin{equation}
   Y_{t}^{0}=\frac{y_{0}}{1-\alpha}, \  0\leq t<\infty.
    \end{equation}
    For $n\geq0$, define $Y_{t}^{n+1}$ to be the unique, continuous, adapted solution to the following equation:\\
   \begin{equation}
    Y_{t}^{n+1}=y_{0}+\int_{0}^{t}\sigma(Y_{s}^{n})dB_{s}+\int_{0}^{t}b(Y_{s}^{n})ds+\alpha \max_{0\leq s\leq t}Y_{s}^{n+1}.
     \end{equation}
Such a solution exists and can be expressed explicitly as
 \begin{equation} Y_{t}^{n+1}=\frac{y_{0}}{1-\alpha}+\int_{0}^{t}\sigma(Y_{s}^{n})dB_{s}+\int_{0}^{t}b(Y_{s}^{n})ds+\frac{\alpha}{1-\alpha} \max_{0\leq s\leq t}(\int_{0}^{s}\sigma(Y_{u}^{n})dB_{u}+\int_{0}^{s}b(Y_{u}^{n})du) \label{Y(n+1)}
  \end{equation}
It was shown in \cite{RDZ} that the solution $Y_{t}$ is the limit of $Y_{t}^{n}$ in $L^{2}(\Omega).$ \\
We will prove the following property by induction on n:\\
(\textbf{P}) \ \  $Y_{t}^{n}\in \mathbb{D}^{1,2}, E[\int_{0}^{t}||DY_{u}^{n}||_{H}^{2}du]< \infty, t \geq 0.$\\
Clearly, (\textbf{P}) holds for n=0. Suppose $Y_{t}^{n}\in\mathbb{D}^{1,2}$ and $E[\int_{0}^{t}||DY_{u}^{n}||_{H}^{2}du]< \infty $. Applying
Proposition 1.2.4 in \cite{N} to the random variable $Y_{s}^{n}$ and to $\sigma$ and $b$, we deduce that the random variables $\sigma(Y_{s}^{n})$ and $b(Y_{s}^{n})$ belong to $\mathbb{D}^{1,2}$ and that there exist adapted processes $\overline{\sigma}^{n}(s)$ and $\overline{b}^{n}(s)$, which are uniformly bounded by some constant K, such that:\\
\begin{equation} \label{eq1:eps}
D_{r}(\sigma(Y_{s}^{n}))=\overline{\sigma}^{n}(s)D_{r}(Y_{s}^{n})I_{\{r\leq s\}},
\end{equation}
and
\begin{equation}\label{eq2:eps}
D_{r}(b(Y_{s}^{n}))=\overline{b}^{n}(s)D_{r}(Y_{s}^{n})I_{\{r\leq s\}}.
\end{equation}
From (\ref{eq1:eps}) and (\ref{eq2:eps}) we get
\begin{equation}
|D_{r}(\sigma(Y_{s}^{n}))|\leq K |D_{r}(Y_{s}^{n})|,
\end{equation}
and
\begin{equation}
|D_{r}(b(Y_{s}^{n}))|\leq K |D_{r}(Y_{s}^{n})|.
\end{equation}
By Lemma 1.3.4 in \cite{N}, we conclude that
\begin{equation}
\int_{0}^{t}\sigma(Y_{s}^{n})dB_{s} \in \mathbb{D}^{1,2}. \label{the second element}
\end{equation}
For $r\leq t$, by Proposition 1.3.8 in \cite{N}, \\
\begin{equation}
D_{r}[\int_{0}^{t}\sigma(Y_{s}^{n})dB_{s}]=\sigma(Y_{r}^{n})+\int_{r}^{t}D_{r}(\sigma(Y_{s}^{n}))dB_{s}
\end{equation}
Similarly, we have
\begin{equation}
\int_{0}^{t}b(Y_{s}^{n})ds \in \mathbb{D}^{1,2}, \label{the third element}
\end{equation}
\begin{equation}
D_{r}[\int_{0}^{t}b(Y_{s}^{n})ds]= \int_{r}^{t}D_{r}(b(Y_{s}^{n}))ds.
\end{equation}
Let $Z_{s}^{n}=\int_{0}^{s}\sigma(Y_{u}^{n})dB_{u}$, $X_{s}^{n}=\int_{0}^{s}b(Y_{u}^{n})du$.
Then
\begin{equation}
Z_{s}^{n}+X_{s}^{n}\in \mathbb{D}^{1,2},
\label{eq2}
\end{equation}
and
\begin{equation}
E[\sup_{0\leq s\leq t}(Z_{s}^{n}+X_{s}^{n})^{2}]\leq E[\sup_{0\leq s\leq t}2((Z_{s}^{n})^{2}+(X_{s}^{n})^{2})]\leq 2E[\sup_{0\leq s\leq t}(Z_{s}^{n})^{2}]+2E[\sup_{0\leq s\leq t}(X_{s}^{n})^{2}]< \infty.
\label{eq3}
\end{equation}
Next we show that
\begin{equation}
 E[\sup_{0\leq s\leq t}||D(Z_{s}^{n}+X_{s}^{n})||_{H}^{2}]< \infty.
\label{eq4}
 \end{equation}
Now
\begin{eqnarray}
\nonumber
 E[\sup_{0\leq s\leq t}||D(Z_{s}^{n}+X_{s}^{n})||_{H}^{2}]
&=& E[\sup_{0\leq s\leq t}\int_{0}^{s}|D_{r}(Z_{s}^{n}+X_{s}^{n})|^{2}dr]\\ \nonumber
&\leq& 3E\{\sup_{0\leq s\leq t}\int_{0}^{s}[\sigma(Y_{r}^{n})^{2}+|\int_{r}^{s}D_{r}(\sigma(Y_{u}^{n}))dB_{u}|^{2}\\ \nonumber
&&+|\int_{r}^{s}D_{r}(b(Y_{u}^{n}))du|^{2}]dr\}\\ \nonumber
&\leq& 3E\int_{0}^{t}\sigma(Y_{r}^{n})^{2}dr+3\int_{0}^{t}E[\sup_{r\leq s \leq t}|\int_{r}^{s}D_{r}(\sigma(Y_{u}^{n}))dB_{u}|^{2}]dr\\ \nonumber
&&+3E\int_{0}^{t}[\int_{r}^{t}|D_{r}(b(Y_{u}^{n}))|du]^{2}dr\\ \nonumber
&\leq& 3\int_{0}^{t}E[\sigma(Y_{r}^{n})^{2}]dr+3C\int_{0}^{t} \int_{r}^{t}E[D_{r}(\sigma(Y_{u}^{n}))]^{2}dudr \\ \nonumber
&&+3\int_{0}^{t}\int_{r}^{t}E[D_{r}(b(Y_{u}^{n}))^{2}]du(t-r)dr \\ \nonumber
&\leq& 3\int_{0}^{t}E[\sigma(Y_{r}^{n})^{2}]dr+3CK^{2}\int_{0}^{t}\int_{r}^{t}E[D_{r}(Y_{u}^{n})^{2}]dudr\\ \nonumber
&&+3K^{2}\int_{0}^{t}\int_{r}^{t}E[D_{r}(Y_{u}^{n})^{2}]du(t-r)dr \\ \nonumber
&\leq&3\int_{0}^{t}E[\sigma(Y_{r}^{n})^{2}]dr
+(3CK^{2}+3K^{2}t)\int_{0}^{t}\int_{r}^{t}E[D_{r}(Y_{u}^{n})^{2}]dudr \\
&<& \infty.   \label{ineq1}
\end{eqnarray}
So we have proved (\ref{eq4}).\\
From (\ref{eq2}),(\ref{eq3}) and (\ref{eq4}), and by Proposition 2.1.10 in \cite{N}, we conclude
\begin{equation}
\max_{0\leq s\leq t}(Z_{s}^{n}+X_{s}^{n})\in  \mathbb{D}^{1,2}, \label{the fourth element}
\end{equation}
and
\begin{equation}
E[||D(\max_{0\leq s\leq t}(Z_{s}^{n}+X_{s}^{n}))||_{H}^{2}]\leq E[\max_{0\leq s\leq t}||D(Z_{s}^{n}+X_{s}^{n})||_{H}^{2}]. \label{ineq2}
\end{equation}
It follows from (\ref{Y(n+1)}) that $Y_{t}^{n+1}\in \mathbb{D}^{1,2}$. Moreover,
\begin{eqnarray}
E\int_{0}^{t}||D(Y_{u}^{n+1})||_{H}^{2}du \nonumber
    &\leq& 4\int_{0}^{t}\int_{0}^{u}E[\sigma(Y_{r}^{n})^{2}]drdu
         +4\int_{0}^{t}\int_{0}^{u}E(\int_{r}^{u}D_{r}(\sigma(Y_{v}^{n}))dB_{v})^{2}drdu \\ \nonumber
&&   +4\int_{0}^{t}\int_{0}^{u}E[\int_{r}^{u}D_{r}(b(Y_{v}^{n}))dv]^{2}drdu\\ \nonumber
&&       +4(\frac{\alpha}{1-\alpha})^{2}\int_{0}^{t}\int_{0}^{u}E[D_{r}(\sup_{0\leq v\leq u}(Z_{v}^{n}+X_{v}^{n}))]^{2}drdu \\ \nonumber
&\leq& 4t\int_{0}^{t}E[\sigma(Y_{r}^{n})^{2}]dr
       +4K^{2}(t+1)\int_{0}^{t}\int_{0}^{u}\int_{r}^{u}E(D_{r}(Y_{v}^{n}))^{2}dvdrdu\\ \nonumber
&&    +4(\frac{\alpha}{1-\alpha})^{2}\int_{0}^{t}\int_{0}^{u}E[D_{r}(\sup_{0\leq v\leq u}(Z_{v}^{n}+X_{v}^{n}))]^{2}drdu \\ \nonumber
&\leq& 4t\int_{0}^{t}E[\sigma(Y_{r}^{n})^{2}]dr+4K^{2}(t+1)\int_{0}^{t}[E\int_{0}^{u}||D(Y_{v}^{n})||_{H}^{2}dv]du \\ \nonumber
&& +4(\frac{\alpha}{1-\alpha})^{2}\int_{0}^{t}E||D(\sup_{0\leq v \leq u}(Z_{v}^{n}+X_{v}^{n}))||_{H}^{2}du \\ \nonumber
&\leq& 4t\int_{0}^{t}E[\sigma(Y_{r}^{n})^{2}]dr+4K^{2}(t+1)\int_{0}^{t}[E\int_{0}^{u}||D(Y_{v}^{n})||_{H}^{2}dv]du \\ \nonumber
&& +4(\frac{\alpha}{1-\alpha})^{2}\int_{0}^{t}E[\sup_{0\leq v \leq u}||D(Z_{v}^{n}+X_{v}^{n})||_{H}^{2}]du \\
&<& \infty, \label{1}
\end{eqnarray}
where (\ref{eq4}) has been used. Property (\textbf{P}) is proved.\\
Now we prove
 \begin{equation}
 \sup_{n}E(||DY_{t}^{n}||_{H}^{2})< \infty.
 \end{equation}
Note that
\begin{eqnarray}
D_{r}(Y_{t}^{n})
&=&\sigma(Y_{r}^{n-1})+\int_{r}^{t}D_{r}(\sigma(Y_{s}^{n-1}))dB_{s}+\int_{r}^{t}D_{r}(b(Y_{s}^{n-1}))ds\nonumber\\ \nonumber
&&+\frac{\alpha}{1-\alpha}D_{r}[\max_{0\leq s \leq t}(Z_{s}^{n-1}+X_{s}^{n-1})].
\end{eqnarray}
 \begin{eqnarray}
E(||DY_{t}^{n}||_{H}^{2}) \nonumber
&=& E\int_{0}^{t}|D_{r}Y_{t}^{n}|^{2}dr \\ \nonumber
&\leq&4\{E[\int_{0}^{t}|\sigma(Y_{r}^{n-1})|^{2}dr]+E[\int_{0}^{t}|\int_{r}^{t}D_{r}(\sigma(Y_{s}^{n-1}))dB_{s}|^{2}dr]\\ \nonumber
&&+E[\int_{0}^{t}|\int_{r}^{t}D_{r}(b(Y_{s}^{n-1}))ds|^{2}dr]\\ \nonumber
&&+(\frac{\alpha}{1-\alpha})^{2}E\int_{0}^{t}|D_{r}\max_{0\leq s \leq t}[\int_{0}^{s}\sigma(Y_{u}^{n-1})dB_{u}+\int_{0}^{s}b(Y_{u}^{n-1})du]|^{2}dr\} \\ \nonumber
&\leq&4\int_{0}^{t}E|\sigma(Y_{r}^{n-1})|^{2}dr+4\int_{0}^{t}E(\int_{r}^{t}|D_{r}(\sigma(Y_{s}^{n-1}))|^{2}ds)dr\\ \nonumber
&&+4t\int_{0}^{t}E[\int_{r}^{t}|D_{r}(b(Y_{s}^{n-1})|^{2}ds]dr \\ \nonumber
&&+4(\frac{\alpha}{1-\alpha})^{2}E[\sup_{0\leq s \leq t}||D(\int_{0}^{s}\sigma(Y_{u}^{n-1})dB_{u}+\int_{0}^{s}b(Y_{u}^{n-1})du)||_{H}^{2}]\\ \nonumber
&\leq &4E[\int_{0}^{t}|\sigma(Y_{r}^{n-1})|^{2}dr]+4K^{2}\int_{0}^{t}E[\int_{r}^{t}|D_{r}(Y_{s}^{n-1})|^{2}ds]dr\\ \nonumber
 &&+4K^{2}t\int_{0}^{t}E[\int_{r}^{t}|D_{r}(Y_{s}^{n-1})|^{2}ds]dr\\ \nonumber
 &&+4(\frac{\alpha}{1-\alpha})^{2}\{3\int_{0}^{t}E(\sigma(Y_{r}^{n-1}))^{2}dr\\ \nonumber
 &&+(3CK^{2}+3K^{2}t)\int_{0}^{t}\int_{r}^{t}E(D_{r}(Y_{u}^{n-1}))^{2}dudr\}\\ \label{ineq3}
 &\leq & C_{1}\int_{0}^{t}E|\sigma(Y_{r}^{n-1})|^{2}dr+C_{2}\int_{0}^{t}E||DY_{u}^{n-1}||_{H}^{2}du.
 \end{eqnarray}
Where (\ref{eq4}) and (\ref{ineq2}) were used.\\
Note that
$A=\sup\limits_{n}\int_{0}^{t}E|\sigma(Y_{r}^{n-1})|^{2}dr\leq C\sup\limits_{n}\int_{0}^{t}E(1+|Y_{r}^{n-1}|^{2})dr<\infty$, because $Y_{n}$ converges to $Y$ uniformly w.r.t time parameter from \cite{RDZ}.\\
Iterating (\ref{ineq3}) gives $\sup\limits_{n}E||DY_{t}^{n}||_{H}^{2}<\infty.$ \\
Thus by Lemma 1.2.3 in \cite{N} we deduce that $Y_{t}\in \mathbb{D}^{1,2}$ and $DY_{t}^{n}\rightarrow DY_{t}$ weakly in $L^{2}(\Omega;H)$. $\Box$
\end{proof}

\begin{theorem}
Assume that $\sigma(\cdot)$ and $b(\cdot)$ are Lipschitz continuous, and $|\sigma(x)|>0$, for all $x \in R$. Then, for $t>0$, the law of $Y_{t}$ is absolutely continuous with respect to Lebesgue measure.
\end{theorem}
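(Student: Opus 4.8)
The plan is to reduce the statement to the nondegeneracy criterion of Theorem 2.1. Since Theorem 3.1 already gives $Y_t\in\mathbb{D}^{1,2}$, it suffices to prove that $\|DY_t\|_H>0$ almost surely, and I will do this by showing that $D_rY_t$ is bounded away from $0$ for $r$ in a (random) left-neighbourhood of $t$. As a preliminary I would pass to the limit $n\to\infty$ in (\ref{Y(n+1)}) to record the representation
\[
Y_t=\frac{y_0}{1-\alpha}+U_t+\frac{\alpha}{1-\alpha}\max_{0\le s\le t}U_s,\qquad U_s:=\int_0^s\sigma(Y_u)\,dB_u+\int_0^s b(Y_u)\,du ,
\]
from which $\max_{0\le s\le t}Y_s$ is attained exactly where $U$ attains its maximum. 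Differentiating (\ref{eq1}) along the lines of the proof of Theorem 3.1, using the bounded adapted processes $\overline\sigma(s),\overline b(s)$ of (\ref{eq1:eps})--(\ref{eq2:eps}), gives for $r\le t$
\[
D_rY_t=D_rU_t+\frac{\alpha}{1-\alpha}\,D_r\Big(\max_{0\le s\le t}U_s\Big),\qquad D_rU_t=\sigma(Y_r)+\int_r^t\overline\sigma(s)D_rY_s\,dB_s+\int_r^t\overline b(s)D_rY_s\,ds,
\]
so that the entire difficulty is concentrated in the derivative of the running maximum.

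Next I would establish two sample-path facts about $U$, both relying on the hypothesis $|\sigma|>0$. Because its martingale part has the strictly increasing quadratic variation $\int_0^\cdot\sigma(Y_u)^2\,du$, the time $\tau:=\operatorname{argmax}_{0\le s\le t}U_s$ is almost surely unique; and $\tau<t$ almost surely, since near $t$ the increment $U_t-U_s=\int_s^t\sigma(Y_u)\,dB_u+\int_s^t b(Y_u)\,du$ is dominated by its martingale part, whose law of the iterated logarithm forces $U_s>U_t$ for some $s<t$ arbitrarily close to $t$. (This is precisely where a comparison with a genuine diffusion can be invoked to transfer the needed path properties.) By the representation above, $\max_{0\le s\le t}Y_s$ is attained at this same $\tau\in(0,t)$.

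The key reduction then comes from the local property of the Malliavin derivative. By the formula for the derivative of a supremum attained at a unique point (Proposition 2.1.10 in \cite{N} and the associated identity), $D_r\max_{0\le s\le t}U_s=D_rU_\tau$, and since $U_\tau$ is determined by the Brownian path up to time $\tau$ one has $D_rU_\tau=0$ whenever $r>\tau$. By uniqueness of $\tau$, for every $v\in[\tau,t]$ the maximum of $U$ on $[0,v]$ is still attained at $\tau$, so $D_r\max_{0\le u\le v}U_u=0$ for $r>\tau$ as well; hence $D_rY_v=D_rU_v$ for all $\tau<r\le v\le t$. Consequently the family $\Phi_r^v:=D_rY_v$ satisfies the closed linear equation
\[
\Phi_r^v=\sigma(Y_r)+\int_r^v\overline\sigma(s)\Phi_r^s\,dB_s+\int_r^v\overline b(s)\Phi_r^s\,ds,\qquad \tau<r\le v\le t .
\]

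Finally I would solve this linear SDE in $v$ (bounded coefficients, initial value $\sigma(Y_r)$ at $v=r$): its solution is the stochastic exponential
\[
\Phi_r^v=\sigma(Y_r)\exp\Big(\int_r^v\overline\sigma(s)\,dB_s-\tfrac12\int_r^v\overline\sigma(s)^2\,ds+\int_r^v\overline b(s)\,ds\Big),
\]
which never vanishes because $\sigma(Y_r)\ne0$. Therefore $D_rY_t=\Phi_r^t\ne0$ for every $r\in(\tau,t]$, and
\[
\|DY_t\|_H^2=\int_0^t|D_rY_t|^2\,dr\ge\int_\tau^t\sigma(Y_r)^2\exp\Big(2\int_r^t\overline\sigma\,dB-\int_r^t\overline\sigma^2\,ds+2\int_r^t\overline b\,ds\Big)dr>0
\]
almost surely, since $\tau<t$; an application of Theorem 2.1 then yields the absolute continuity of the law of $Y_t$. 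I expect the main obstacle to be the rigorous justification of the third step: computing the Malliavin derivative of the running maximum through the random argmax $\tau$ and propagating the vanishing of that term into the derivative equation for all intermediate times $v\in(\tau,t)$. This hinges on the a.s. uniqueness and interiority of $\tau$ (hence on $|\sigma|>0$) together with a careful pathwise use of the local property of $D$, while the verification that the formal derivative equation holds as a genuine $L^2$ identity rests on the estimates already developed for Theorem 3.1.
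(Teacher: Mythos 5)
Your overall strategy --- reduce to Theorem 2.1, observe that the perturbation is $\frac{\alpha}{1-\alpha}\max_{0\le s\le t}U_s$ with $U_s=\int_0^s\sigma(Y_u)\,dB_u+\int_0^s b(Y_u)\,du$, and use the local property of $D$ to kill $D_r\bigl(\max_{0\le s\le t}U_s\bigr)$ for $r$ beyond the time where the maximum is achieved --- is the same core insight as the paper's. But the way you propose to finish has a genuine gap. The decisive step is your ``closed linear equation'' for $\Phi_r^v=D_rY_v$ on $\{\tau<r\le v\le t\}$ and its identification with a stochastic exponential. That identity holds, at best, only on the non-adapted, $\mathcal{F}_t$-measurable event $\{\tau<r\}$ (equivalently on $A_m=\{\max_{0\le s\le t}U_s=\max_{0\le s\le t-\epsilon_m}U_s\}$ with $r>t-\epsilon_m$). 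Uniqueness for an SDE restricted to such an event is not available by the usual Gronwall/It\^o-isometry argument, because the indicator of a non-adapted event cannot be moved inside the stochastic integral; and a variation-of-constants representation of the error would require $v\mapsto D_r\bigl(\max_{0\le u\le v}U_u\bigr)$ to be a semimartingale in $v$, which you have not established. You flag this step yourself as the main obstacle, but the proposal offers no mechanism to overcome it. In addition, the a.s. uniqueness of the argmax $\tau$ and the claim $\tau<t$ a.s. are asserted via an LIL heuristic rather than proved (they require a Dambis--Dubins--Schwarz time change together with $|\sigma|>0$, and your use of $D\max_{s\le t}U_s=DU_\tau$ needs the unique-argmax refinement of Proposition 2.1.10, none of which is carried out).

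The paper's proof avoids every one of these difficulties by never solving an equation for $D_rY_t$. It only needs a lower bound for the time average $\epsilon^{-1}\int_{t-\epsilon}^t(D_rY_t)^2\,dr$: from $(a+b)^2\ge\tfrac12 a^2-b^2$ this is at least $\tfrac12\epsilon^{-1}\int_{t-\epsilon}^t\sigma(Y_r)^2\,dr$ minus remainder terms; the remainders $\int_r^tD_r(\sigma(Y_s))\,dB_s$ and $\int_r^tD_r(b(Y_s))\,ds$ are estimated globally in $L^1(\Omega)$ to be $O(\epsilon^2)$, hence $o(\epsilon)$ a.s. along a subsequence $\epsilon_n\downarrow0$; and the maximum term is killed on each $A_m$ by the local property exactly as you intend. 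Since $\Omega=\bigcup_m A_m\cup A$ with $A=\{\max_{0\le s\le t}Y_s=Y_t\}$ \emph{by construction}, no argmax uniqueness or interiority is needed: on $A$ one simply has $(1-\alpha)D_rY_t=\sigma(Y_r)+\int_r^tD_r(\sigma(Y_s))\,dB_s+\int_r^tD_r(b(Y_s))\,ds$, and the same averaging gives $\liminf_n\epsilon_n^{-1}\int_{t-\epsilon_n}^t(D_rY_t)^2\,dr\ge\frac{1}{2(1-\alpha)^2}\sigma(Y_t)^2>0$. If you want to salvage the stochastic-exponential route you would first have to prove a localization principle for linear SDEs on non-adapted events; the averaging argument is the cheaper path.
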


\begin{proof}
According to Theorem 2.1, we just need to show that
$||DY_{t}||_{H}^{2}>0$ a.s..\\
Note that,
\begin{eqnarray}
D_{r}Y_{t}
&=&\sigma(Y_{r})+\int_{r}^{t}D_{r}(\sigma(Y_{s}))dB_{s}+\int_{r}^{t}D_{r}(b(Y_{s}))ds+\alpha D_{r}(\max_{0\leq s\leq t}Y_{s}),\  r\leq t \label{DYt}
\end{eqnarray}
We have,
\begin{eqnarray}
(D_{r}Y_{t})^{2} \nonumber
&\geq & \frac{1}{2}\sigma(Y_{r})^{2}-[\int_{r}^{t}D_{r}(\sigma(Y_{s}))dB_{s}+\int_{r}^{t}D_{r}(b(Y_{s}))ds+\alpha D_{r}(\max_{0\leq s\leq t}Y_{s})]^{2}\\ \nonumber
&\geq&\frac{1}{2}\sigma(Y_{r})^{2}
-3\{[\int_{r}^{t}D_{r}(\sigma(Y_{s}))dB_{s}]^{2}+[\int_{r}^{t}D_{r}(b(Y_{s}))ds]^{2}+\alpha^{2}[D_{r}(\max_{0\leq s \leq t}Y_{s})]^{2}\}. \\ \nonumber
\end{eqnarray}
Since $\sigma(Y_{r})^{2}$ is continuous w.r.t r,
 it follows that
 \begin{eqnarray}
 \lim_{\epsilon \rightarrow 0}\frac{1}{\epsilon}\int_{t-\epsilon}^{t}\sigma(Y_{r})^{2}dr=\sigma(Y_{t})^{2}. \label{con1}
\end{eqnarray}
Now,
\begin{eqnarray} \nonumber
&&E\{\int_{t-\epsilon}^{t}[\int_{r}^{t}D_{r}(b(Y_{s}))ds]^{2}dr\}\\ \nonumber
&\leq &  \int_{t-\epsilon}^{t}E[\int_{r}^{t}|D_{r}(b(Y_{s}))|^{2}ds (t-r)]dr\\ \nonumber
&\leq& K^{2}\int_{t-\epsilon}^{t}\int_{r}^{t}E|D_{r}(Y_{s})|^{2}(t-r)dsdr\\ \nonumber
&\leq&K^{2}\epsilon\int_{t-\epsilon}^{t}\int_{r}^{t}E|D_{r}Y_{s}|^{2}dsdr \\ \nonumber
&=& K^{2}\epsilon\int_{t-\epsilon}^{t}\int_{t-\epsilon}^{s}E|D_{r}Y_{s}|^{2}drds \\ \nonumber
&\leq&K^{2}M\epsilon^{2}.
\end{eqnarray}
\begin{eqnarray*}
E\int_{t-\epsilon}^{t}[\int_{r}^{t}D_{r}(\sigma(Y_{s}))dB_{s}]^{2}dr
                         &\leq& \int_{t-\epsilon}^{t}E[\int_{r}^{t}D_{r}(\sigma(Y_{s}))^{2}ds]dr\\
                         &\leq& K^{2}\int_{t-\epsilon}^{t}E[\int_{r}^{t}(D_{r}Y_{s})^{2}ds]dr\\
                         &\leq& K^{2}\int_{t-\epsilon}^{t}\int_{t-\epsilon}^{s}E(D_{r}Y_{s})^{2}drds\\
                         &\leq& K^{2}\int_{t-\epsilon}^{t}\int_{s-\epsilon}^{s}E(D_{r}Y_{s})^{2}drds.
\end{eqnarray*}
Next we show that $\int_{s-\epsilon}^{s}E[(D_{r}Y_{s})]^{2}dr \leq C\epsilon$,
where $C$ is independent of $s$.
Because
$$
D_{r}Y_{s}^{n}
      =\sigma(Y_{r}^{n-1})+\int_{r}^{s}D_{r}(\sigma(Y_{u}^{n-1}))dB_{u}
       +\int_{r}^{s}D_{r}(b(Y_{u}^{n-1}))du+\frac{\alpha}{1-\alpha}D_{r}[\max\limits_{0\leq u\leq s}(Z_{u}^{n-1}+X_{u}^{n-1})],
$$
we have,
\begin{eqnarray}
E\int_{s-\epsilon}^{s}(D_{r}Y_{s}^{n})^{2}dr
     &\leq& 4E\int_{s-\epsilon}^{s}\sigma(Y_{r}^{n-1})^{2}dr \nonumber
         +4E\int_{s-\epsilon}^{s}[\int_{r}^{s}D_{r}(\sigma(Y_{u}^{n-1}))dB_{u}]^{2}dr\\ \nonumber
         &&+4E\int_{s-\epsilon}^{s}[\int_{r}^{s}D_{r}b(Y_{u}^{n-1})du]^{2}dr\\
         &&+4(\frac{\alpha}{1-\alpha})^{2}E\int_{s-\epsilon}^{s}(D_{r}(\max_{0\leq u\leq s}(Z_{u}^{n-1}+X_{u}^{n-1}))^{2}dr \label{bijiao1}\\
     &\leq& 4\int_{s-\epsilon}^{s}E[\sigma(Y_{r}^{n-1})^{2}]dr \nonumber
            +4\int_{s-\epsilon}^{s}E\int_{r}^{s}D_{r}(\sigma(Y_{u}^{n-1}))^{2}dudr\\ \nonumber
         && +4E\int_{s-\epsilon}^{s}[\int_{r}^{s}D_{r}(b(Y_{u}^{n-1}))^{2}du(s-r)]dr\\
         && +4(\frac{\alpha}{1-\alpha})^{2}E\sup_{0\leq u\leq s}\int_{s-\epsilon}^{s}[D_{r}(Z_{u}^{n-1}+X_{u}^{n-1})]^{2}dr \label{bijiao2}\\
     &\leq& 4\int_{s-\epsilon}^{s}E[\sigma(Y_{r}^{n-1})^{2}]dr \nonumber
           +4K^{2}\int_{s-\epsilon}^{s}\int_{r}^{s}E(D_{r}(Y_{u}^{n-1}))^{2}dudr\\ \nonumber
           &&+4K^{2}\epsilon \int_{s-\epsilon}^{s}\int_{r}^{s}E[D_{r}(Y_{u}^{n-1})]^{2}dudr\\ \nonumber
           &&+4(\frac{\alpha}{1-\alpha})^{2}E\sup_{0\leq u \leq s}\int_{s-\epsilon}^{s}[D_{r}(\int_{0}^{u}\sigma(Y_{v}^{n-1})dB_{v}+\int_{0}^{u}b(Y_{v}^{n-1})dv)]^{2}dr\\ \nonumber
     &\leq& 4\int_{s-\epsilon}^{s}E[\sigma(Y_{r}^{n-1})^{2}]dr
             +(4K^{2}+4K^{2}\epsilon)\int_{s-\epsilon}^{s}\int_{r}^{s}E(D_{r}Y_{u}^{n-1})^{2}dudr\\ \nonumber
           &&+4(\frac{\alpha}{1-\alpha})^{2}\{3\int_{s-\epsilon}^{s}E[\sigma(Y_{r}^{n-1})^{2}]dr
             +(3C_{1}K^{2}+3K^{2}\epsilon)\int_{s-\epsilon}^{s}\int_{r}^{s}E(D_{r}Y_{v}^{n-1})^{2}dvdr\}\\ \nonumber
     &=& (4+12(\frac{\alpha}{1-\alpha})^{2})\int_{s-\epsilon}^{s}E[\sigma(Y_{r}^{n-1})^{2}]dr\\ \nonumber
            &&+[4K^{2}+4K^{2}\epsilon+4(\frac{\alpha}{1-\alpha})^{2}(3C_{1}K^{2}
            +3K^{2}\epsilon)]\int_{s-\epsilon}^{s}\int_{r}^{s}E(D_{r}Y_{v}^{n-1})^{2}dvdr\\ \nonumber
     &=& C^{'}\int_{s-\epsilon}^{s}E[\sigma(Y_{r}^{n-1})^{2}]dr
            +C^{''}\int_{s-\epsilon}^{s}\int_{s-\epsilon}^{v}E(D_{r}Y_{v}^{n-1})drdv\\ \nonumber
     &\leq& C^{'}\int_{s-\epsilon}^{s}E[\sigma(Y_{r}^{n-1})^{2}]dr
            +C^{''}\int_{s-\epsilon}^{s}\int_{v-\epsilon}^{v}E(D_{r}(Y_{v}^{n-1}))^{2}drdv,
\end{eqnarray}
where we have used Proposition 2.1.10 in \cite{N} from (\ref{bijiao1}) to (\ref{bijiao2}).\\
Let $\phi_{n}(s)=E\int_{s-\epsilon}^{s}(D_{r}Y_{s}^{n})^{2}dr$, and $\phi(s)=E\int_{s-\epsilon}^{s}(D_{r}Y_{s})^{2}dr$,
then $\phi_{n}(s)\leq C^{*}\epsilon+C^{''}\int_{s-\epsilon}^{s}\phi_{n-1}(v)dv$.
Iterating it, we get
\begin{eqnarray}
\phi_{n}(s)
      &\leq& C^{*}\epsilon(1+C^{''}\epsilon+(C^{''}\epsilon)^{2}+...+(C^{''}\epsilon)^{n})\\
      &=&C^{*}\epsilon\frac{1}{1-C^{''}\epsilon}\\
      &\leq&2C^{*}\epsilon,
\end{eqnarray}
when $\epsilon$ is sufficiently small.
Then
\begin{eqnarray}
\phi(s)
  \leq \liminf_{n\rightarrow\infty}\phi_{n}(s)\leq 2C^{*}\epsilon.
\end{eqnarray}
So
\begin{eqnarray}
E\int_{t-\epsilon}^{t}[\int_{r}^{t}D_{r}(\sigma(Y_{s}))dB_{s}]^{2}dr
  \leq K^{2}\int_{t-\epsilon}^{t}\phi(s)ds
  \leq 2C^{*}K^{2}\epsilon^{2}.
\end{eqnarray}
Therefore
\begin{eqnarray}  \nonumber
  \lim_{\epsilon \rightarrow 0}\frac{1}{\epsilon}E\{\int_{t-\epsilon}^{t}([\int_{r}^{t}D_{r}(\sigma(Y_{s}))dB_{s}]^{2}
  +[\int_{r}^{t}D_{r}(b(Y_{s}))ds]^{2})dr\}=0.\\ \nonumber
\end{eqnarray}
Hence, there exists $\epsilon_{n} \downarrow 0,$
such that
\begin{eqnarray}
 \lim_{\epsilon_{n}\rightarrow 0}\frac{1}{\epsilon_{n}}\int_{t-\epsilon_{n}}^{t}([\int_{r}^{t}D_{r}(\sigma(Y_{s}))dB_{s}]^{2}
 +[\int_{r}^{t}D_{r}(b(Y_{s}))ds]^{2})dr=0
\ a.s..  \label{con*}
\end{eqnarray}
Set $$A_{n}=\{\omega: \max_{0 \leq s \leq t}Y_{s}(w)=\max_{0\leq s \leq t-\epsilon_{n}}Y_{s}(\omega)\},$$
and
\begin{eqnarray*}
A=\{\max_{0 \leq s \leq t}Y_{s}=Y_{t}\}.
\end{eqnarray*}
It is clear that $\Omega=\bigcup_{m=1}^{\infty}A_{m}\bigcup A.$\\
For $\omega \in A_{m}$, $\forall n>m$, we have
$$\int_{t-\epsilon_{n}}^{t}\alpha^{2}D_{r}(\max_{0\leq s \leq t-\epsilon_{m}}Y_{s}(\omega))^{2}dr=0.$$ \\
By the local property of the Malliavin derivative (Proposition 1.3.16 in \cite{N}) on $A_{m}$, we have \\
 \begin{eqnarray}\nonumber
 &&\lim_{n \rightarrow \infty}\frac{1}{\epsilon_{n}}\int_{t-\epsilon_{n}}^{t}\alpha^{2}(D_{r}(\max_{0 \leq s \leq t}Y_{s}(\omega))^{2}dr\\ \nonumber
 &=&\lim_{n \rightarrow \infty}\frac{1}{\epsilon_{n}}\int_{t-\epsilon_{n}}^{t}\alpha^{2}(D_{r}(\max_{0 \leq s \leq t-\epsilon_{m}}Y_{s}(\omega))^{2}dr \\ \label{con3}
 &=& 0.
 \end{eqnarray}
 Since $m$ is arbitrary,
by (\ref{con*}) and (\ref{con3}), we conclude that \\
$$
\lim_{\epsilon_{n}\rightarrow 0}\frac{1}{\epsilon_{n}}\int_{t-\epsilon_{n}}^{t}(D_{r}Y_{t})^{2}dr \geq \frac{1}{2}\sigma(Y_{t})^{2}>0 \ a.s. \mbox{ on} \bigcup_{m=1}^{\infty}A_{m}.
$$
For $\omega \in A$, according to (\ref{DYt}), we have
 \begin{eqnarray*}
(1-\alpha)D_{r}Y_{t}
&=&\sigma(Y_{r})+\int_{r}^{t}D_{r}(\sigma(Y_{s}))dB_{s}+\int_{r}^{t}D_{r}(b(Y_{s}))ds,
\end{eqnarray*}
\begin{eqnarray*}
(1-\alpha)^{2}(D_{r}Y_{t})^{2}
\geq \frac{1}{2}\sigma(Y_{r})^{2}-[\int_{r}^{t}D_{r}(\sigma(Y_{s}))dB_{s}+\int_{r}^{t}D_{r}(b(Y_{s}))ds]^{2},
\end{eqnarray*}
Since $\alpha <1$,
\begin{eqnarray*}
 (D_{r}Y_{t})^{2}
\geq \frac{1}{2(1-\alpha)^{2}}\sigma(Y_{r})^{2}-\frac{1}{(1-\alpha)^{2}}[\int_{r}^{t}D_{r}(\sigma(Y_{s}))dB_{s}+\int_{r}^{t}D_{r}(b(Y_{s}))ds]^{2}.
\end{eqnarray*}
Here on $A$,
\begin{eqnarray}
\lim_{\epsilon_{n} \rightarrow 0}\frac{1}{\epsilon_{n}}\int_{t-\epsilon_{n}}^{t}(D_{r}Y_{t})^{2}dr
\geq\frac{1}{2(1-\alpha)^{2}}\lim_{\epsilon_{n} \rightarrow 0}\frac{1}{\epsilon_{n}}\int_{t-\epsilon_{n}}^{t}\sigma(Y_{r})^{2}dr
=\frac{\sigma(Y_{t})^{2}}{2(1-\alpha)^{2}}>0.
\end{eqnarray}
Therefore
\begin{eqnarray}
||DY_{t}||_{H}^{2}=\int_{0}^{t}(D_{r}Y_{t})^{2}dr>0 \ \ a.s..
\end{eqnarray}
By Theorem 2.1, we conclude that the law of $Y_{t}$ is absolutely continuous with respect to Lebesgue measure.  $\Box$
\end{proof}\\

\section{Absolute continuity of the laws of perturbed reflected diffusion processes}
Consider the reflected, perturbed stochastic differential equation:
\begin{equation}
X_{t}=\int_{0}^{t}\sigma(X_{s})dB_{s}+\int_{0}^{t}b(X_{s})ds+\alpha \max_{0\leq s \leq t}X_{s}+L_{t}. \label{perturbed reflected eq}
\end{equation}

\begin{definition}
We say that $(X_{t},L_{t},t\geq 0)$ is a solution to the equation (\ref{perturbed reflected eq}) if\\
(i) $X_{0}=0, X_{t}\geq 0$ for $t\geq 0$,\\
(ii) $X_{t}, L_{t}$ are continuous and adapted to the filtration of $B$,\\
(iii) $L_{t}$ is non-decreasing with $L_{0}=0$ and $\int_{0}^{t}\chi\{X_{s}=0\}dL_{s}=L_{t}$,\\
(iv) $(X_{t},L_{t},t\geq 0)$ satisfies the equation (\ref{perturbed reflected eq}) almost surely for every $t>0$.
\end{definition}
we need the following lemma which strengthens the result of Proposition 2.1.10 in \cite{N}.
\begin{lemma}
Let $X=\{X_{s},0\leq s \leq t\}$ be a continuous process. Suppose that\\
(i) $E(\sup\limits_{0\leq s\leq t} X_{s}^{2})< \infty$,\\
(ii) for any $0\leq s\leq t, X_{s}\in \mathbb{D}^{1,2}$ and $E(\sup\limits_{0\leq s\leq t}||DX_{s}||_{H}^{2})<\infty$,\\
Then the random variable $M_{t}=\sup\limits_{0\leq s \leq t}X_{s}$ belongs to $\mathbb{D}^{1,2}$ and moreover,\\ $||DM_{t}||_{H}^{2}\leq \sup\limits_{0\leq s\leq t}||DX_{s}||_{H}^{2}$ a.s..
\end{lemma}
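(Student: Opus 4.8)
The plan is to follow the proof of Proposition 2.1.10 in \cite{N}, which already produces the supremum in $\mathbb{D}^{1,2}$ together with the \emph{expectation} bound $E[||DM_{t}||_{H}^{2}]\leq E[\sup_{0\leq s\leq t}||DX_{s}||_{H}^{2}]$, and then to upgrade that expectation bound to the pointwise almost sure inequality asserted here. The genuinely new content is precisely this last upgrade; the rest is a routine adaptation of the standard argument.

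First I would discretize. Since $X$ is continuous, fix a sequence $\{s_{i}\}_{i\geq 1}$ dense in $[0,t]$ and set $M_{n}=\max_{1\leq i\leq n}X_{s_{i}}$. Each $X_{s_{i}}$ lies in $\mathbb{D}^{1,2}$ by hypothesis (ii), and since $(x_{1},\dots,x_{n})\mapsto\max_{i}x_{i}$ is globally Lipschitz, the chain rule for Lipschitz functions (Proposition 1.2.4 in \cite{N}) gives $M_{n}\in\mathbb{D}^{1,2}$. Writing $A_{i}^{n}$ for the disjoint measurable sets on which $X_{s_{i}}$ realizes the maximum $M_{n}$ (resolving ties by the smallest index, so that the $A_{i}^{n}$ partition $\Omega$), one has the representation $DM_{n}=\sum_{i=1}^{n}\chi_{A_{i}^{n}}DX_{s_{i}}$, and since $DM_{n}=DX_{s_{i}}$ on $A_{i}^{n}$ this already yields the \emph{almost sure} bound $||DM_{n}||_{H}^{2}=\sum_{i=1}^{n}\chi_{A_{i}^{n}}||DX_{s_{i}}||_{H}^{2}\leq\sup_{0\leq s\leq t}||DX_{s}||_{H}^{2}$.

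Next I would pass to the limit. By continuity of $X$, $M_{n}\uparrow M_{t}$ pointwise, and hypothesis (i) with dominated convergence gives $M_{n}\to M_{t}$ in $L^{2}(\Omega)$. The a.s. bound just obtained together with hypothesis (ii) gives $\sup_{n}E[||DM_{n}||_{H}^{2}]\leq E[\sup_{0\leq s\leq t}||DX_{s}||_{H}^{2}]<\infty$, so $\{DM_{n}\}$ is bounded in $L^{2}(\Omega;H)$. By the closedness of $D$ (Lemma 1.2.3 in \cite{N}) it follows that $M_{t}\in\mathbb{D}^{1,2}$ and that $DM_{n}\to DM_{t}$ weakly in $L^{2}(\Omega;H)$.

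The delicate point, and the place where the lemma goes beyond Proposition 2.1.10 in \cite{N}, is converting this weak convergence into the pointwise a.s. estimate, since weak limits do not preserve almost sure bounds in general. I would argue by convexity. Put $G=\sup_{0\leq s\leq t}||DX_{s}||_{H}^{2}$ and consider $\mathcal{K}=\{V\in L^{2}(\Omega;H):||V(\omega)||_{H}^{2}\leq G(\omega)\ \text{a.s.}\}$. This set is convex, by the triangle inequality in $H$, and strongly closed in $L^{2}(\Omega;H)$, since a strongly convergent sequence has an a.s. convergent subsequence along which the pointwise bound passes to the limit; being convex and strongly closed, $\mathcal{K}$ is weakly closed. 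As $DM_{n}\in\mathcal{K}$ for every $n$ by the discretization step and $DM_{n}\to DM_{t}$ weakly, we obtain $DM_{t}\in\mathcal{K}$, i.e. $||DM_{t}||_{H}^{2}\leq\sup_{0\leq s\leq t}||DX_{s}||_{H}^{2}$ a.s., which is the claim. Equivalently one may invoke Mazur's lemma to produce convex combinations of the $DM_{n}$ converging strongly, hence a.s. along a subsequence, to $DM_{t}$, each such combination inheriting the bound by convexity. I expect this weak-to-almost-sure passage to be the main obstacle, the earlier steps being standard.
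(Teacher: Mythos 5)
Your proposal is correct, and its skeleton coincides with the paper's: the same countable dense discretization $M_{n}=\max\{X_{t_{1}},\dots,X_{t_{n}}\}$, the same partition into sets $A_{k}$ on which the maximum is attained at $t_{k}$, the same use of the local property of $D$ to get $DM_{n}=\sum_{k}I_{A_{k}}DX_{t_{k}}$ and hence the pointwise bound $\|DM_{n}\|_{H}^{2}\leq\sup_{0\leq s\leq t}\|DX_{s}\|_{H}^{2}$ a.s., and the same appeal to closability (Lemma 1.2.3 in \cite{N}) to obtain $M_{t}\in\mathbb{D}^{1,2}$ with $DM_{n}\rightharpoonup DM_{t}$ weakly in $L^{2}(\Omega;H)$. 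Where you diverge is in the final upgrade from the weak limit to the almost sure inequality, which you correctly identify as the only step going beyond Proposition 2.1.10 in \cite{N}. You argue that the constraint set $\mathcal{K}=\{V:\|V(\omega)\|_{H}^{2}\leq G(\omega)\ \text{a.s.}\}$ is convex and strongly closed, hence weakly closed by Mazur, so the weak limit stays in $\mathcal{K}$. The paper instead reduces the a.s. inequality to the family of integral inequalities $E[\|DM_{t}\|_{H}^{2}\xi]\leq E[\sup_{s}\|DX_{s}\|_{H}^{2}\xi]$ over non-negative bounded $\xi$, passes to the weighted measure $\mu=\xi\,dP$, checks that $DM_{n}\rightharpoonup DM_{t}$ also in $L^{2}(\Omega,\mu;H)$ (using boundedness of $\xi$), and invokes weak lower semicontinuity of the norm in that weighted space. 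The two devices are essentially equivalent --- the paper's testing argument is a hands-on proof of the weak closedness you get abstractly from Mazur --- but yours is arguably cleaner and shorter, while the paper's is more self-contained in that it uses only lower semicontinuity of a Hilbert norm under weak convergence. Both are complete; there is no gap in your argument.
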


\begin{proof}
Consider a countable and dense subset $S_{0}=\{t_{n},n\geq 1\}$ of $[0,t]$.
Define $M_{n}=\sup\{X_{t_{1}},...,X_{t_{n}}\}$. The function $\varphi_{n}:\mathbb{R}^{n}\rightarrow\mathbb{R}$ defined by $\varphi_{n}(x_{1},...,x_{n})= max\{x_{1},...,x_{n}\}$ is Lipschitz. Therefore, we deduce that $M_{n}$ belongs to $\mathbb{D}^{1,2}$. Moreover, the sequence $M_{n}$ converges in $L^{2}(\Omega)$ to $M$. In order to evaluate the Malliavin derivative of $M_{n}$, we introduce the following sets:
\begin{eqnarray}
&&A_{1}=\{M_{n}=X_{t_{1}}\},\\ \nonumber
&& ... ...\\ \nonumber
&&A_{k}=\{M_{n}\neq X_{t_{1}},...,M_{n}\neq X_{t_{k-1}},M_{n}=X_{t_{k}}\},\ \  2 \leq k\leq n.
\end{eqnarray}
By the local property of the operator $D$, on the set $A_{k}$ the derivatives of the random variables $M_{n}$ and $X_{t_{k}}$ coincide. Hence, we can write
\begin{equation}
DM_{n}=\Sigma_{k=1}^{n}I_{A_{k}}DX_{t_{k}}
\end{equation}
Consequently,
\begin{equation}
E(||DM_{n}||_{H}^{2})\leq E(\sup_{0\leq s \leq t}||DX_{s}||_{H}^{2})< \infty
\end{equation}
Then, $M_{t}=\sup\limits_{0\leq s \leq t}X_{s}$ belongs to $\mathbb{D}^{1,2}$ and $DM_{n}$  weakly converges to $DM_{t}$ in $L^{2}(\Omega,P;H)$.\\
Now we want to show that
\begin{equation}
||DM_{t}||_{H}^{2}\leq \sup_{0\leq s \leq t}||DX_{s}||_{H}^{2}\ \   a.e..
\end{equation}
It is equivalent to prove that
for every non-negative bounded random variable $\xi$,
\begin{equation}
E[||DM_{t}||_{H}^{2}\xi]\leq E[\sup_{0\leq s \leq t}||DX_{s}||_{H}^{2}\xi], \label{eq:1}
\end{equation}
\begin{equation}
i.e.\int_{\Omega}||DM_{t}||_{H}^{2}\xi dP \leq \int_{\Omega}\sup_{0\leq s\leq t}||DX_{s}||_{H}^{2}\xi dP.
\end{equation}
Define $\mu(A)=\int_{A}\xi dP,\ \ \forall A \in \mathcal{F}$,
then (\ref{eq:1}) is equivalent to
\begin{equation}
\int_{\Omega}[||DM_{t}||_{H}^{2}]d\mu\leq \int_{\Omega}[\sup_{0 \leq s\leq t}||DX_{s}||_{H}^{2}]d\mu.
\end{equation}
For $ h \in L^{2}(\Omega,\mu;H)$, because $\xi$ is bounded, $\xi h\in L^{2}(\Omega, P; H)$.\\
Consequently, by the weak convergence of $DM_{n}$,
\begin{eqnarray}
\int_{\Omega}[(DM_{n},h)_{H}]d\mu\nonumber
&=&\int_{\Omega}(DM_{n},\xi h)_{H}dP \\ \nonumber
& \longrightarrow& \int_{\Omega}(DM_{t},\xi h)_{H}dP  \\ \nonumber
&=&\int_{\Omega}(DM_{t},h)d\mu \\ \nonumber
\end{eqnarray}
This shows that $DM_{n}\rightarrow DM_{t}$ weakly in $L^{2}(\Omega,\mu;H)$.\\
Hence, we have
\begin{equation}
\int_{\Omega}(||DM_{t}||_{H}^{2})d\mu \leq \liminf_{n\rightarrow \infty}\int_{\Omega}(||DM_{n}||_{H}^{2})d\mu\leq \int_{\Omega}(\sup_{0\leq s \leq t}||DX_{s}||_{H}^{2})d\mu< \infty. \ \Box \nonumber
\end{equation}

\end{proof}

\begin{theorem}
Assume $0\leq \alpha < \frac{1}{2}$. Let $(X_{t},L_{t},t\geq 0)$ be the unique solution to the equation (\ref{perturbed reflected eq}). Then $X_{t}$ belongs to $\mathbb{D}^{1,2}$ for any $t\geq 0$.
\end{theorem}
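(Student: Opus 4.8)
The plan is to follow the strategy of the proof of Theorem 3.1, adapted to accommodate the reflection term. I would work with the Picard approximations $X^{n}$ of equation (\ref{perturbed reflected eq}) introduced in \cite{RDZ}, which converge to $X$ in $L^{2}(\Omega)$ uniformly in the time parameter, and prove by induction on $n$ that $X_{t}^{n}\in\mathbb{D}^{1,2}$ together with a finiteness bound of the form $E[\sup_{0\leq s\leq t}\|DX_{s}^{n}\|_{H}^{2}]<\infty$. As in Theorem 3.1, the diffusion and drift parts $\int_{0}^{t}\sigma(X_{s}^{n})dB_{s}$ and $\int_{0}^{t}b(X_{s}^{n})ds$ are handled by Proposition 1.2.4, Lemma 1.3.4 and Proposition 1.3.8 of \cite{N}, the chain rule giving $|D_{r}(\sigma(X_{s}^{n}))|\leq K|D_{r}X_{s}^{n}|$ and likewise for $b$. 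The genuinely new feature is the reflection term $L^{n}$, which I would treat through its Skorohod representation $L_{t}^{n}=\sup_{0\leq s\leq t}\big(-(g^{n}(s)+\alpha\max_{u\leq s}X_{u}^{n})\big)^{+}$: being the supremum of a Lipschitz transform of a $\mathbb{D}^{1,2}$ process, it is amenable to the lemma just proved (Lemma 4.1).

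The core of the argument, and the place where the hypothesis $0\leq\alpha<\tfrac12$ enters, is the a priori estimate on $\|DX_{t}\|_{H}$. Differentiating (\ref{perturbed reflected eq}) gives, for $r\leq t$,
\[
D_{r}X_{t}=D_{r}\Big(\int_{0}^{t}\sigma(X_{s})dB_{s}+\int_{0}^{t}b(X_{s})ds\Big)+\alpha D_{r}\big(\max_{0\leq s\leq t}X_{s}\big)+D_{r}L_{t}.
\]
Writing $g(t)=\int_{0}^{t}\sigma(X_{s})dB_{s}+\int_{0}^{t}b(X_{s})ds$, Lemma 4.1 bounds the maximum term by $\|D(\max_{s\leq t}X_{s})\|_{H}^{2}\leq\sup_{s\leq t}\|DX_{s}\|_{H}^{2}$, while the reflection term, being a supremum of a $1$-Lipschitz functional of $g(s)+\alpha\max_{u\leq s}X_{u}$, satisfies $\|DL_{t}\|_{H}\leq\sup_{s\leq t}\|D(g(s)+\alpha\max_{u\leq s}X_{u})\|_{H}\leq\sup_{s\leq t}\|Dg(s)\|_{H}+\alpha\sup_{s\leq t}\|DX_{s}\|_{H}$. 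Setting $V_{t}=\sup_{s\leq t}\|DX_{s}\|_{H}$ and collecting terms yields an inequality of the form $V_{t}\leq(\text{contributions from }g)+2\alpha V_{t}$, so that $(1-2\alpha)V_{t}$ is controlled by the diffusion and drift contributions alone. This is precisely why $\alpha<\tfrac12$ is required: the factor $2\alpha$ is the sum of the $\alpha$ multiplying the maximum and the $\alpha$ inherited by $L$ through its dependence on that maximum, and only for $\alpha<\tfrac12$ does the self-referential inequality close.

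With this estimate in hand I would first run, inside each Picard step, the fixed-point iteration that solves the coupled maximum–reflection problem for given $\sigma(X^{n})$ and $b(X^{n})$; each iterate lies in $\mathbb{D}^{1,2}$ by stability of the class under stochastic and Lebesgue integration and under the supremum (Lemma 4.1), the iteration converges in $L^{2}$ because it is a contraction for $\alpha<\tfrac12$, and the uniform derivative bound above together with Lemma 1.2.3 of \cite{N} places the limit $X^{n+1}$ in $\mathbb{D}^{1,2}$. Iterating the resulting recursive inequality in the time variable (a Gronwall argument, using $\sup_{n}\int_{0}^{t}E|\sigma(X_{s}^{n})|^{2}ds<\infty$, which follows from the uniform convergence $X^{n}\to X$) then gives $\sup_{n}E\|DX_{t}^{n}\|_{H}^{2}<\infty$, and a final application of Lemma 1.2.3 yields $X_{t}\in\mathbb{D}^{1,2}$. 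I expect the main obstacle to be exactly the simultaneous treatment of the maximum and the reflection: since both are implicit in the solution and each is a supremum functional, one must show that $X$, $M_{t}=\max_{s\leq t}X_{s}$ and $L_{t}$ are jointly Malliavin differentiable and that their derivative estimates close up, which is delicate and is what forces $\alpha<\tfrac12$ rather than the weaker $\alpha<1$ of the non-reflected case.
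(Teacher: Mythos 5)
Your proposal is correct and follows essentially the same route as the paper: Picard iteration, induction on $n$ using the chain rule for the Lipschitz coefficients, Lemma 4.1 applied both to $\max_{0\le s\le t}X_s^{n}$ and to $L_t^{n+1}=\max_{0\le s\le t}V_s^{n}$ via the Skorohod representation, and the same $2\alpha$ bookkeeping (one $\alpha$ from the perturbation term, one inherited by $L$ through the reflection map) that forces $\alpha<\tfrac{1}{2}$. The only organizational difference is that the paper's Picard step uses $\alpha\max_{0\le s\le t}X_s^{n}$ (the previous iterate), so each step is an explicit Skorohod map of a known process and no inner fixed-point iteration is needed; correspondingly, the paper also (re)proves the uniform $L^{2}$ convergence of this scheme, and the closing condition appears as a coefficient $c_2\approx(2\alpha)^2<1$ in a recursion over $n$ rather than as your absorbed $2\alpha V_t$ term.
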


\begin{proof}
Consider the Picard iteration, $X_{t}^{0}=0, \  \forall t \in[0,T], \ T\geq 0$,
and let $(X_{t}^{n+1},L_{t}^{n+1})$ be the unique solution to the following reflected equation:
\begin{eqnarray}
X_{t}^{n+1}=\int_{0}^{t}\sigma(X_{s}^{n})dB_{s}+\int_{0}^{t}b(X_{s}^{n})ds+\alpha \max_{0\leq s \leq t}X_{s}^{n}+L_{t}^{n+1}. \label{interating eq}
\end{eqnarray}
By the reflection principle,
\begin{eqnarray}
L_{t}^{n+1}=-\inf_{s \leq t}\{(\int_{0}^{s}\sigma(X_{u}^{n})dB_{u}+\int_{0}^{s}b(X_{u}^{n})du+\alpha \max_{0 \leq u \leq s}X_{u}^{n})\wedge 0\}.\\ \nonumber
\end{eqnarray}
It was shown in \cite{RDZ}, there exists a unique solution $X_{t}$ to (\ref{perturbed reflected eq}). Next we are going to show that
\begin{equation}
\lim_{n\rightarrow\infty}E[\sup_{0\leq s\leq t}|X_{s}^{n}-X_{s}|^{2}]=0.
\end{equation}
 Now Eq(\ref{perturbed reflected eq}) and Eq(\ref{interating eq}) imply that:
 \begin{eqnarray} \nonumber
|X_{t}^{n+1}-X_{t}|
&\leq& |\int_{0}^{t}(\sigma(X_{s}^{n})-\sigma(X_{s}))dB_{s}|+2\alpha \max_{0 \leq s \leq t}|X_{s}^{n}-X_{s}|. \\ \nonumber
&&+|\int_{0}^{t}(b(X_{s}^{n})-b(X_{s}))ds|+\max_{0 \leq s \leq t}|\int_{0}^{s}(\sigma(X_{u}^{n})-\sigma(X_{u}))dB_{u}|\\ \nonumber
&&+\max_{0\leq s\leq t}|\int_{0}^{s}(b(X_{u}^{n})-b(X_{u}))du|, \\ \nonumber
\end{eqnarray}
where we have used the fact:
\begin{equation}
L_{t}=-\inf_{0\leq s\leq t}\{(\int_{0}^{s}\sigma(X_{u})dB_{u}+\int_{0}^{s}b(X_{u})du+\alpha \max_{0\leq u\leq s}X_{u})\wedge 0\}.
\end{equation}
Consequently,
\begin{eqnarray}
 \max_{0 \leq s \leq t}|X_{s}^{n+1}-X_{s}|
&\leq& 2\max_{0 \leq s \leq t}|\int_{0}^{s}(\sigma(X_{u}^{n})-\sigma(X_{u}))dB_{u}|\\ \nonumber
&&+2\max_{0\leq s\leq t}|\int_{0}^{s}(b(X_{u}^{n})-b(X_{u}))du|+2\alpha \max_{0 \leq s \leq t}|X_{s}^{n}-X_{s}|. \\ \nonumber
\end{eqnarray}
For any $\epsilon >0$, using the elementary inequality, $(a+b)^{2}\leq (1+C_{\epsilon})a^{2}+(1+\epsilon)b^{2},$
we obtain
\begin{eqnarray}
\max_{0 \leq s \leq t}|X_{s}^{n+1}-X_{s}|^{2} \nonumber
&\leq& 4(1+C_{\epsilon})[\max_{0 \leq s \leq t}|\int_{0}^{s}(\sigma(X_{u}^{n})-\sigma(X_{u}))dB_{u}|
 \\ \nonumber
 &&+\max_{0\leq s \leq t}|\int_{0}^{s}(b(X_{u}^{n})-b(X_{u}))du|]^{2}+(1+\epsilon)(2\alpha)^{2}\max_{0 \leq s \leq t}|X_{s}^{n}-X_{s}|^{2} \\ \nonumber
 &\leq& 8(1+C_{\epsilon})[\max_{0\leq s\leq t}|\int_{0}^{s}(\sigma(X_{u}^{n})-\sigma(X_{u}))dB_{u}|^{2}
 \\ \nonumber
 &&+\max_{0\leq s\leq t}|\int_{0}^{s}(b(X_{u}^{n})-b(X_{u}))du|^{2}]+(1+\epsilon)(2\alpha)^{2}\max_{0\leq s\leq t}|X_{s}^{n}-X_{s}|^{2}.
 \end{eqnarray}
 By Burkh\"{o}lder inequality,
 \begin{eqnarray}
 E[\max_{0 \leq s \leq t}|X_{s}^{n+1}-X_{s}|^{2}] \nonumber
 &\leq& 8(1+C_{\epsilon})\{E[\max_{0 \leq s \leq t}|\int_{0}^{s}(\sigma(X_{u}^{n})-\sigma(X_{u}))dB_{u}|^{2}]\\ \nonumber
 &&+E[\max_{0\leq s\leq t}|\int_{0}^{s}(b(X_{u}^{n})-b(X_{u}))du|^{2}]\}\\ \nonumber
 &&+(1+\epsilon)(2\alpha)^{2}E[\max_{0 \leq s \leq t}|X_{s}^{n}-X_{s}|^{2}]  \\ \nonumber
 &\leq& 8(1+C_{\epsilon})(K_{1}C^{2}+TC^{2})E[\int_{0}^{t}|X_{u}^{n}-X_{u}|^{2}du]\\ \nonumber
&&+(1+\epsilon)(2\alpha)^{2}E[\max_{0 \leq s \leq t}|X_{s}^{n}-X_{s}|^{2}].
 \end{eqnarray}
 Let $g_{n+1}(t)=E[\max\limits_{0 \leq s \leq t}|X_{s}^{n+1}-X_{s}|^{2}]$. The above inequality implies
\begin{eqnarray}
g_{n+1}(t)\leq 8(K_{1}C^{2}+TC^{2})(1+C_{\epsilon})\int_{0}^{t}g_{n}(s)ds+(1+\epsilon)(2\alpha)^{2}g_{n}(t).\\ \nonumber
\end{eqnarray}
Summing the above equations from 1 to M:
\begin{eqnarray}
\sum_{n=1}^{M}g_{n+1}(t)\leq 8(K_{1}C^{2}+TC^{2})(1+C_{\epsilon})\int_{0}^{t}\sum_{n=1}^{M}g_{n}(s)ds+(1+\epsilon)(2\alpha)^{2}\sum_{n=1}^{M}g_{n}(t).
\end{eqnarray}
And then,
\begin{eqnarray}
\sum_{n=1}^{M}g_{n}(t)-g_{1}(t)
&\leq& \sum_{n=1}^{M}g_{n+1}(t) \\
&\leq& C^{*}\int_{0}^{t}\sum_{n=1}^{M}g_{n}(s)ds+ \beta \sum_{n=1}^{M}g_{n}(t), \label{gn}
\end{eqnarray}
where $\beta=(1+\epsilon)(2\alpha)^{2}$, $C^{*}$ is a constant.
Choose $\epsilon >0$ sufficiently small so that $\beta=(1+\epsilon)(2\alpha)^{2}<1.$\\
It follows from (\ref{gn}) that
 \begin{eqnarray}
 (1-\beta)\sum_{n=1}^{M}g_{n}(t)
 \leq g_{1}(t)+C^{*}\int_{0}^{t}\sum_{n=1}^{M}g_{n}(s)ds.
\end{eqnarray}
By Gronwall inequality,
\begin{eqnarray}
\sum_{n=1}^{M}g_{n}(t)\leq \frac{g_{1}(T)}{1-\beta}e^{\frac{C^{*}}{1-\beta} T}.
\end{eqnarray}
Let $M\rightarrow\infty$ to get
\begin{eqnarray}
\sum_{n=1}^{\infty}E[\max_{0\leq s\leq t}|X_{s}^{n}-X_{s}|^{2}]< \infty.
\end{eqnarray}
which yields that $X_{t}^{n}$ converges to $X_{t}$ in $L^{2}(\Omega).$\\
Let $Y_{s}^{n}=\max\limits_{0\leq u\leq s}X_{u}^{n}.$
We will prove the following property by induction on n.\\
(\textbf{P}). $X_{t}^{n}\in \mathbb{D}^{1,2}$, $E(\max\limits_{0\leq s\leq t}||DX_{s}^{n}||_{H}^{2})< \infty$, $E(\max\limits_{0\leq s\leq t}||DY_{s}^{n}||_{H}^{2})<\infty$.\\
Clearly, (\textbf{P}) holds for $n=0$.\\
Suppose (\textbf{P})holds for $n$. We prove that it is valid for $n+1$.\\
Now we note that
\begin{eqnarray}
\int_{0}^{t}\sigma(X_{s}^{n})dB_{s}\in \mathbb{D}^{1,2},
\int_{0}^{t}b(X_{s}^{n})ds\in\mathbb{D}^{1,2}, \label{the two elements}
\end{eqnarray}
and
\begin{eqnarray*}
D_{r}(\int_{0}^{t}\sigma(X_{s}^{n})dB_{s})=\sigma(X_{r}^{n})+\int_{r}^{t}D_{r}(\sigma(X_{s}^{n}))dB_{s},\\
D_{r}(\int_{0}^{t}b(X_{s}^{n})ds)=\int_{r}^{t}D_{r}(b(X_{s}^{n}))ds.
\end{eqnarray*}
Next we prove $ \max\limits_{0 \leq s \leq t}X_{s}^{n}\in \mathbb{D}^{1,2}.$\\
As
\begin{eqnarray}
\max_{0\leq s \leq t}|X_{s}^{n}|\nonumber
&\leq& 2 \max_{0\leq s \leq t}|\int_{0}^{s}\sigma(X_{u}^{n-1})dB_{u}|+2\alpha \max_{0\leq s \leq t}|X_{s}^{n-1}|+2\max_{0\leq s\leq t}|\int_{0}^{s}b(X_{u}^{n-1})du|, \nonumber
\end{eqnarray}
we get
\begin{eqnarray}
E(\max_{0\leq s \leq t}|X_{s}^{n}|^{2})\nonumber
&\leq& 12E[\max_{0\leq s \leq t}|\int_{0}^{s}\sigma(X_{u}^{n-1})dB_{u}|^{2}]+12\alpha^{2}E(\max_{0\leq s \leq t}|X_{s}^{n-1}|^{2})\\ \nonumber
&&+12E[\max_{0\leq s \leq t}|\int_{0}^{s}b(X_{u}^{n-1})du|^{2}]\\ \nonumber
&\leq& 12K_{1}E[\int_{0}^{t}\sigma(X_{u}^{n-1})^{2}du]+12\alpha^{2}E(\max_{0\leq s \leq t}|X_{s}^{n-1}|^{2})+12TE[\int_{0}^{t}b(X_{u}^{n-1})^{2}du]\\ \nonumber
&\leq& 12(K_{1}+T)C^{2}[E\int_{0}^{t}(1+(X_{u}^{n-1})^{2})du]+12\alpha^{2}E(\max_{0\leq s \leq t}|X_{s}^{n-1}|^{2})\\ \nonumber
&\leq& 12(K_{1}+T)C^{2}T+[12(K_{1}+T)C^{2}T+12\alpha^{2}]E(\max_{0\leq s \leq t}|X_{s}^{n-1}|^{2}).
\end{eqnarray}
By interation, we see that
\begin{equation}
E(\max_{0\leq s \leq t}|X_{s}^{n}|^{2})< \infty.
\end{equation}
By the induction hypothesis $E(\max\limits_{0\leq s\leq t}||DX_{s}^{n}||_{H}^{2})< \infty$ and Proposition 2.1.10 in \cite{N}, it follows that
\begin{equation}
\max_{0\leq s \leq t}X_{s}^{n}\in \mathbb{D}^{1,2}. \label{firstly}
\end{equation}
Now we want to show
\begin{equation}
L_{t}^{n+1}=\max_{0\leq s \leq t}\{-(\int_{0}^{s}\sigma(X_{u}^{n})dB_{u}+\int_{0}^{s}b(X_{u}^{n})du+\alpha \max_{0\leq u \leq s}X_{u}^{n})\vee 0\}\in \mathbb{D}^{1,2}.
\end{equation}
Let \begin{equation}
V_{s}^{n}:=-(\int_{0}^{s}\sigma(X_{u}^{n})dB_{u}+\int_{0}^{s}b(X_{u}^{n})du+\alpha \max_{0\leq u \leq s}X_{u}^{n})\vee 0 .
\end{equation}
Firstly, $V_{s}^{n}\in \mathbb{D}^{1,2}$ by (\ref{the two elements}) and (\ref{firstly}).
Secondly,
\begin{eqnarray}
E(\max_{0\leq s \leq t}(V_{s}^{n})^{2})\nonumber
&\leq& 3E[\max_{0\leq s \leq t}(\int_{0}^{s}\sigma(X_{u}^{n})dB_{u})^{2}]+3\alpha^{2}E[\max_{0\leq s \leq t}(X_{s}^{n})^{2}]+3E[\max_{0\leq s\leq t}(\int_{0}^{s}b(X_{u}^{n})du)^{2}]\\ \nonumber
&=& 3K_{1}E[\int_{0}^{t}\sigma(X_{u}^{n})^{2}du]+3\alpha^{2}E[\max_{0\leq s \leq t}(X_{s}^{n})^{2}]+3TE[\int_{0}^{t}b(X_{s}^{n})^{2}ds]\\ \nonumber
&\leq& 3(K_{1}+T)C^{2}E\int_{0}^{t}(1+(X_{u}^{n})^{2})du+3\alpha^{2}E[\max_{0\leq s\leq t}(X_{s}^{n})^{2}]\\ \nonumber
&\leq&3C^{2}(K_{1}+T)(T+TE[\max_{0\leq s\leq t}(X_{s}^{n})^{2}])+3\alpha^{2}E[\max_{0\leq s \leq t}(X_{s}^{n})^{2}]\\\nonumber
&=&3C^{2}T(K_{1}+T)+[3C^{2}(K_{1}+T)T+3\alpha^{2}]E[\max_{0\leq s\leq t}(X_{s}^{n})^{2}] \\
&<& \infty. \label{secondly}
\end{eqnarray}
Thirdly,
\begin{eqnarray}
E(\max_{0\leq s\leq t}||DV_{s}^{n}||_{H}^{2})\nonumber
&=&E(\max_{0\leq s \leq t}\int_{0}^{s}(D_{r}(V_{s}^{n}))^{2}dr)\\ \nonumber
&\leq&3(1+C_{\epsilon})E\int_{0}^{t}\sigma(X_{r}^{n})^{2}dr+3(1+C_{\epsilon})E[\max_{0\leq s \leq t}\int_{0}^{s}(\int_{r}^{s}D_{r}(\sigma(X_{u}^{n}))dB_{u})^{2}dr]\\ \nonumber
&&+3(1+C_{\epsilon})E[\max_{0\leq s\leq t}\int_{0}^{s}(\int_{r}^{s}D_{r}(b(X_{u}^{n}))du)^{2}dr] \\ \nonumber
&&+(1+\epsilon)\alpha^{2}E[\max_{0\leq s \leq t}\int_{0}^{s}(D_{r}(Y_{s}^{n}))^{2}dr]\\ \nonumber
&\leq&3(1+C_{\epsilon})E\int_{0}^{t}\sigma(X_{r}^{n})^{2}dr+3(1+C_{\epsilon})\int_{0}^{t}[E\int_{r}^{t}(D_{r}(\sigma(X_{u}^{n})))^{2}du]dr\\ \nonumber
&&+3(1+C_{\epsilon})t\int_{0}^{t}E[\int_{r}^{t}(D_{r}b(X_{u}^{n}))^{2}du]dr+(1+\epsilon)\alpha^{2}E[\max_{0\leq s \leq t}||DY_{s}^{n}||_{H}^{2}]\\ \nonumber
&\leq& 3(1+C_{\epsilon})E\int_{0}^{t}\sigma(X_{r}^{n})^{2}dr+3(1+C_{\epsilon})(1+t)K^{2}\int_{0}^{t}E||DX_{u}^{n}||_{H}^{2}du\\ \nonumber
&&+(1+\epsilon)\alpha^{2}E[\max_{0\leq s \leq t}||DY_{s}^{n}||_{H}^{2}] \\
&<&\infty. \label{thirdly}
\end{eqnarray}
By Proposition 2.1.10 in \cite{N}, (\ref{secondly}) and (\ref{thirdly}) yield that
$L_{t}^{n+1}\in \mathbb{D}^{1,2}.$
Thus, we conclude $X_{t}^{n+1}\in \mathbb{D}^{1,2}.$\\
Moreover,
\begin{eqnarray}
D_{r}(X_{s}^{n+1})\nonumber
&=&\sigma(X_{r}^{n})+\int_{r}^{s}D_{r}(\sigma(X_{u}^{n}))dB_{u}+\int_{r}^{s}D_{r}(b(X_{u}^{n}))du+\alpha D_{r}(\max_{0\leq u \leq s }X_{u}^{n})+D_{r}(L_{s}^{n+1}), \nonumber
\end{eqnarray}
and
\begin{eqnarray}
||D(X_{s}^{n+1})||_{H}^{2}
&=&\int_{0}^{s}(D_{r}(X_{s}^{n+1}))^{2}dr\\ \nonumber
&\leq&5\int_{0}^{s}\sigma(X_{r}^{n})^{2}dr+5\int_{0}^{s}[\int_{r}^{s}D_{r}(\sigma(X_{u}^{n}))dB_{u}]^{2}dr
+5\int_{0}^{s}[\int_{r}^{s}(D_{r}(b(X_{u}^{n})))du]^{2}dr\\ \nonumber
&&+5\alpha^{2}||DY_{s}^{n}||_{H}^{2}+5||DL_{s}^{n+1}||_{H}^{2}.
\end{eqnarray}
So
\begin{eqnarray}
E[\max_{0\leq s \leq t}||DX_{s}^{n+1}||_{H}^{2}]\nonumber
              &\leq& 5E[\int_{0}^{t}\sigma(X_{r}^{n})^{2}dr]+5E\int_{0}^{t}\max_{0\leq s\leq t}[\int_{r}^{s}D_{r}(\sigma(X_{u}^{n}))dB_{u}]^{2}dr \\ \nonumber
                                & &+5TE\int_{0}^{t}\int_{r}^{t}(D_{r}(b(X_{u}^{n})))^{2}dudr+5\alpha^{2}E[\max_{0\leq s \leq t}||DY_{s}^{n}||_{H}^{2}]\\ \nonumber
&&+5E[\max_{0\leq s \leq t}||DL_{s}^{n+1}||_{H}^{2}]\\ \nonumber
&\leq& 5E\int_{0}^{t}\sigma(X_{r}^{n})^{2}dr+5K_{1}\int_{0}^{t}E\int_{r}^{t}(D_{r}(\sigma(X_{u}^{n})))^{2}dudr
 \\ \nonumber
&&+5TE\int_{0}^{t}\int_{r}^{t}(D_{r}(b(X_{u}^{n})))^{2}dudr+5\alpha^{2}E[\max_{0\leq s\leq t}||DY_{s}^{n}||_{H}^{2}]\\ \nonumber
&&+5E[\max_{0\leq s \leq t}||DL_{s}^{n+1}||_{H}^{2}]\\ \nonumber
&\leq& 5E\int_{0}^{t}\sigma(X_{r}^{n})^{2}dr+(5K_{1}K^{2}+5TK^{2})\int_{0}^{t}E||DX_{u}^{n}||_{H}^{2}du\\ \nonumber
&&+5\alpha^{2}E[\max_{0\leq s \leq t}||DY_{s}^{n}||_{H}^{2}]+5E[\max_{0\leq s \leq t}||DL_{s}^{n+1}||_{H}^{2}].
\end{eqnarray}
To prove
\begin{equation}
E[\max_{0\leq s \leq t}||DX_{s}^{n+1}||_{H}^{2}]< \infty,
\end{equation}
we only need to prove
\begin{equation}
E[\max_{0\leq s \leq t}||DL_{s}^{n+1}||_{H}^{2}]<\infty.
\end{equation}
According to Lemma 4.1,
\begin{equation}
||DL_{s}^{n+1}||_{H}^{2}\leq
\sup_{0\leq u \leq s}||DV_{u}^{n}||_{H}^{2}.
\end{equation}
Thus we have
\begin{eqnarray}
\max_{0\leq s\leq t}||DL_{s}^{n+1}||_{H}^{2}\leq \max_{0\leq s\leq t}(\sup_{0\leq u \leq s} ||DV_{u}^{n}||_{H}^{2})=\max_{0\leq s \leq t}||DV_{s}^{n}||_{H}^{2},
\end{eqnarray}
and by (\ref{thirdly}),
\begin{eqnarray}
E[\max_{0\leq s \leq t}||DL_{s}^{n+1}||_{H}^{2}]\leq E[\sup_{0\leq s \leq t}||DV_{s}^{n}||_{H}^{2}]< \infty.
\end{eqnarray}
Again by Lemma 4.1,
\begin{equation}
||DY_{s}^{n+1}||_{H}^{2}\leq \sup_{0\leq u \leq s}||DX_{u}^{n+1}||_{H}^{2}.
\end{equation}
Hence,
\begin{eqnarray}
E[\max_{0\leq s \leq t}||DY_{s}^{n+1}||_{H}^{2}] \leq E[\sup_{0\leq s\leq t}||DX_{s}^{n+1}||_{H}^{2}]< \infty.
\end{eqnarray}
We've proved property (\textbf{P}).\\
Next we prove
\begin{equation}
\sup_{n}E||DX_{t}^{n+1}||_{H}^{2}<\infty.
\end{equation}
Because, for any $\epsilon >0,$
\begin{eqnarray*}
|D_{r}X_{s}^{n+1}|^{2}
&\leq & (1+C_{\epsilon})[3\sigma(X_{r}^{n})^{2}+3(\int_{r}^{s}D_{r}(\sigma(X_{u}^{n}))dB_{u})^{2}
+3(\int_{r}^{s}D_{r}(b(X_{u}^{n}))du)^{2}]\\
&&+(1+\epsilon)[2\alpha^{2}D_{r}(\max_{0\leq u\leq s}X_{u}^{n})^{2}+2D_{r}(L_{s}^{n+1})^{2}].
\end{eqnarray*}
We have
\begin{eqnarray*}
||DX_{s}^{n+1}||_{H}^{2}
                  &\leq& 3(1+C_{\epsilon})\int_{0}^{s}\sigma(X_{r}^{n})^{2}dr
                          +3(1+C_{\epsilon})\int_{0}^{s}[\int_{r}^{s}D_{r}(\sigma(X_{u}^{n}))dB_{u}]^{2}dr \\
                       & &+3(1+C_{\epsilon})\int_{0}^{s}[\int_{r}^{s}D_{r}(b(X_{u}^{n}))du]^{2}dr\\
                       & &+2(1+\epsilon)\alpha^{2}\int_{0}^{s}D_{r}(\max_{0\leq u\leq s}X_{u}^{n})^{2}dr
                          +2(1+\epsilon)\int_{0}^{s}D_{r}(L_{s}^{n+1})^{2}dr\\
                   &=& 3(1+C_{\epsilon})\int_{0}^{s}\sigma(X_{r}^{n})^{2}dr
                          +3(1+C_{\epsilon})\int_{0}^{s}[\int_{r}^{s}D_{r}(\sigma(X_{u}^{n}))dB_{u}]^{2}dr\\
                       & &+3(1+C_{\epsilon})\int_{0}^{s}[\int_{r}^{s}D_{r}(b(X_{u}^{n}))du]^{2}dr\\
                       & &+2(1+\epsilon)\alpha^{2}||DY_{s}^{n}||_{H}^{2}+2(1+\epsilon)||DL_{s}^{n+1}||_{H}^{2}\\
                  &\leq& 3(1+C_{\epsilon})\int_{0}^{s}\sigma(X_{r}^{n})^{2}dr
                          +3(1+C_{\epsilon})\int_{0}^{s}[\int_{r}^{s}D_{r}\sigma(X_{u}^{n})dB_{u}]^{2}dr\\
                       & &+3(1+C_{\epsilon})\int_{0}^{s}[\int_{r}^{s}D_{r}(b(X_{u}^{n}))du]^{2}dr\\
                       & &+2(1+\epsilon)\alpha^{2}\sup_{0\leq u\leq s}||DX_{u}^{n}||_{H}^{2}
                          +2(1+\epsilon)\sup_{0\leq u\leq s}||DV_{u}^{n}||_{H}^{2},
\end{eqnarray*}
 where Lemma 4.1 was used in the last step.
Hence, using Ito's Isometry we have
\begin{eqnarray}
E(\sup_{0\leq s\leq t}||DX_{s}^{n+1}||_{H}^{2}) \nonumber
&\leq& 3(1+C_{\epsilon})E\int_{0}^{t}\sigma(X_{r}^{n})^{2}dr
+3K_{1}K^{2}(1+C_{\epsilon})\int_{0}^{t}E\int_{r}^{t}(D_{r}(X_{u}^{n}))^{2}dudr\\ \nonumber
&&+3TK^{2}(1+C_{\epsilon})\int_{0}^{t}E\int_{r}^{t}(D_{r}(X_{u}^{n}))^{2}dudr\\ \nonumber
&&+2(1+\epsilon)\alpha^{2}E[\sup_{0\leq u \leq t}||DX_{u}^{n}||_{H}^{2}]
+2(1+\epsilon)E[\sup_{0\leq u\leq t}||DV_{u}^{n}||_{H}^{2}]\\ \nonumber
&=&3(1+C_{\epsilon})E\int_{0}^{t}\sigma(X_{r}^{n})^{2}dr
+3(K_{1}+T)K^{2}(1+C_{\epsilon})\int_{0}^{t}E||DX_{u}^{n}||_{H}^{2}du\\ \nonumber
&&+2(1+\epsilon)\alpha^{2}E\sup_{0\leq s\leq t}||DX_{s}^{n}||_{H}^{2}+2(1+\epsilon)\{3(1+C_{\epsilon})E\int_{0}^{t}\sigma(X_{r}^{n})^{2}dr\\ \nonumber
&&+6(1+C_{\epsilon})K^{2}\int_{0}^{t}E||DX_{u}^{n}||_{H}^{2}du+(1+\epsilon)\alpha^{2}E[\sup_{0\leq s\leq t}||DY_{s}^{n}||_{H}^{2}]\}\\ \nonumber
&\leq& 3(1+C_{\epsilon})\int_{0}^{t}E[\sigma(X_{r}^{n})^{2}]dr+3(K_{1}+T)K^{2}(1+C_{\epsilon})\int_{0}^{t}E||DX_{u}^{n}||_{H}^{2}du\\ \nonumber
&&+2(1+\epsilon)\alpha^{2}E[\sup_{0\leq s\leq t}||DX_{s}^{n}||_{H}^{2}]+6(1+\epsilon)(1+C_{\epsilon})E\int_{0}^{t}\sigma(X_{r}^{n})^{2}dr\\ \nonumber
&&+12(1+\epsilon)(1+C_{\epsilon})K^{2}\int_{0}^{t}E||DX_{u}^{n}||_{H}^{2}du+(1+\epsilon)^{2}\alpha^{2}E[\sup_{0\leq s \leq t}||DX_{s}^{n}||_{H}^{2}]\\ \nonumber
&=&(9+6\epsilon)(1+C_{\epsilon})\int_{0}^{t}E[\sigma(X_{r}^{n})^{2}]dr+2(2+\epsilon)(1+\epsilon)\alpha^{2}E(\sup_{0\leq s\leq t}||DX_{s}^{n}||_{H}^{2})\\
&&+[12K^{2}(1+\epsilon)(1+C_{\epsilon})+3K^{2}(K_{1}+T)(1+C_{\epsilon})]\int_{0}^{t}E||DX_{u}^{n}||_{H}^{2}du. \label{ineq5}
\end{eqnarray}
Note that $\sup_{n}\int_{0}^{t}E[\sigma(X_{r}^{n})^{2}]dr\leq C\sup_{n}\int_{0}^{t}E(1+|X_{r}^{n}|^{2})dr<\infty$.\\
Let
\begin{eqnarray*}
\psi_{n}(t)=E(\sup_{0\leq s\leq t}||DX_{s}^{n}||_{H}^{2}).
\end{eqnarray*}
Then from (\ref{ineq5}), we have
\begin{eqnarray*}
\psi_{n+1}(t)\leq c_{1}+c_{2}\psi_{n}(t)+c_{3}\int_{0}^{t}\psi_{n}(u)du,
\end{eqnarray*}
where $c_{2}=2(2+\epsilon)(1+\epsilon)\alpha^{2}<1$ when $\epsilon >0$ is sufficiently small, according to $\alpha<\frac{1}{2}$.\\
Iterating this inequality, we obtain
$$\sup_{n}\psi_{n+1}(t)<\infty, \ \mbox{i.e.}\ \  \sup_{n}E[\max_{0\leq s \leq t}||DX_{s}^{n+1}||_{H}^{2}]< \infty.$$
According to Lemma 1.2.3 in \cite{N}, $X_{t} \in \mathbb{D}^{1,2}$.  $\Box$
\end{proof}

To study the absolute continuity of the law, we need the following comparison theorem.

\begin{lemma}
Assume $0\leq \alpha < \frac{1}{2}$. Let $X_{t}$ be the solution to the perturbed, reflected stochastic differential equation
$$
X_{t}=\int_{0}^{t}\sigma(X_{s})dB_{s}+\int_{0}^{t}b(X_{s})ds+\alpha \max\limits_{0\leq s\leq t}X_{s}+L_{t}.
$$
Let $Y_{t}$ be the solution to the reflected stochastic equation $Y_{t}=\int_{0}^{t}\sigma(Y_{s})dB_{s}+\int_{0}^{t}b(Y_{s})ds+\widetilde{L_{t}}$.
Then, we have that $Y_{t}\leq X_{t}$ a.e..
\end{lemma}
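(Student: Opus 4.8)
The plan is to prove $(Y_t-X_t)^+=0$ by an Itô--Tanaka argument combined with Gronwall's lemma, exploiting two structural facts: $X$ carries the extra \emph{nonnegative nondecreasing} term $\alpha\max_{0\le s\le t}X_s$, and both processes are reflected at $0$. First I would set $D_t=Y_t-X_t$. Since both equations start at the origin, $D_0=0$, and subtracting the two equations gives
\begin{equation}
D_t=\int_0^t(\sigma(Y_s)-\sigma(X_s))\,dB_s+\int_0^t(b(Y_s)-b(X_s))\,ds+\widetilde{L}_t-\alpha\max_{0\le s\le t}X_s-L_t. \nonumber
\end{equation}
Write $M_t=\max_{0\le s\le t}X_s$, which is continuous and nondecreasing with $M_t\ge 0$, because $X\ge 0$ and $X_0=0$.

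Next I would apply the Itô--Tanaka formula to $\phi(D_t)$ with $\phi(x)=(x^+)^2$. Here $\phi\in C^1$ with $\phi'(x)=2x^+$ Lipschitz and $\phi''=2\,\mathbf{1}_{(0,\infty)}$ a.e., so $\phi''$ has no atom at $0$; hence no local time of $D$ at $0$ appears and the second-order term only involves the continuous martingale part $\int(\sigma(Y)-\sigma(X))\,dB$. Taking expectations, the stochastic integral vanishes (by the moment bounds established earlier) and I obtain
\begin{align}
E[(D_t^+)^2]
&=2E\int_0^t D_s^+(b(Y_s)-b(X_s))\,ds+E\int_0^t \mathbf{1}_{\{D_s>0\}}(\sigma(Y_s)-\sigma(X_s))^2\,ds \nonumber\\
&\quad+2E\int_0^t D_s^+\,d\widetilde{L}_s-2\alpha E\int_0^t D_s^+\,dM_s-2E\int_0^t D_s^+\,dL_s. \nonumber
\end{align}

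The heart of the argument is the sign analysis of the last three terms. Since $M$ and $L$ are nondecreasing, $\alpha\ge 0$ and $D_s^+\ge 0$, the terms $-2\alpha E\int_0^t D_s^+\,dM_s$ and $-2E\int_0^t D_s^+\,dL_s$ are both $\le 0$ and may be discarded. The only term with an unfavourable sign is $2E\int_0^t D_s^+\,d\widetilde{L}_s$, and here the reflection structure of $Y$ is decisive: $\widetilde{L}$ increases only on the set $\{Y_s=0\}$, where $D_s=Y_s-X_s=-X_s\le 0$ (because $X_s\ge 0$), so $D_s^+=0$ on the support of $d\widetilde{L}$ and this integral vanishes. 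With all reflection and maximum contributions removed, the Lipschitz bounds $|b(Y_s)-b(X_s)|\le C|D_s|$ and $(\sigma(Y_s)-\sigma(X_s))^2\le C^2D_s^2$ give $D_s^+|b(Y_s)-b(X_s)|\le C(D_s^+)^2$ and $\mathbf{1}_{\{D_s>0\}}(\sigma(Y_s)-\sigma(X_s))^2\le C^2(D_s^+)^2$, whence $E[(D_t^+)^2]\le(2C+C^2)\int_0^t E[(D_s^+)^2]\,ds$. Gronwall's lemma forces $E[(D_t^+)^2]=0$ for every $t$, so $D_t^+=0$ a.s., and by path continuity $Y_t\le X_t$ for all $t$ a.s.

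The step I expect to be the main obstacle is the rigorous application of the Itô--Tanaka formula in the presence of the two local times $\widetilde{L}$, $L$ and the path functional $M_t$: one must confirm that the lack of smoothness of $\phi$ at $0$ produces no extra local-time contribution (which follows since $\phi''$ has no atom at the origin) and carefully verify the support property of $\widetilde{L}$ that annihilates the single unfavourable term. The fact that $0\le\alpha$ (rather than the sharper $\alpha<\tfrac12$ needed elsewhere) is exactly what makes the $M_t$ term harmless; everything else reduces to a routine Gronwall estimate.
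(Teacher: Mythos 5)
Your proposal is correct and follows the same overall strategy as the paper's proof: form the semimartingale decomposition of $\Delta_t=Y_t-X_t$, expand a function of its positive part by an It\^o-type formula, eliminate the three singular terms ($d\widetilde{L}$, $dL$, $d(\max_{0\le u\le s}X_u)$) by exactly the sign and support analysis you give (in particular, $\widetilde{L}$ charges only $\{Y_s=0\}$, where $\Delta_s=-X_s\le 0$), and close with Gronwall. The only genuine difference is the choice of test function. The paper uses the Yamada--Watanabe smooth approximations $\phi_n\to x^+$ with $0\le\phi_n'\le 1$ and $\phi_n''=\rho_n$ supported in $(a_n,a_{n-1})$ with $\rho_n(x)\le 2/(nC^2x^2)$, so the second-order term is bounded by $t/n$ and the stochastic integral $\int_0^t\phi_n'(\Delta_s)(\sigma(Y_s)-\sigma(X_s))\,dB_s$ has zero expectation under second-moment bounds alone. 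You instead apply the Meyer--It\^o formula directly to $\phi(x)=(x^+)^2$, which is legitimate: $\phi$ is convex and $C^1$ with Lipschitz derivative, $\phi''=2\,\mathbf{1}_{(0,\infty)}$ has no atom at the origin, and the occupation-time formula turns the local-time term into $\int_0^t\mathbf{1}_{\{\Delta_s>0\}}(\sigma(Y_s)-\sigma(X_s))^2\,ds$, exactly as you state. This spares you the approximating sequence, but at a price you should make explicit: your stochastic integrand $2\Delta_s^+(\sigma(Y_s)-\sigma(X_s))$ is quadratic in $\Delta_s$, so discarding its expectation requires either fourth moments of $X$ and $Y$ (which the paper never establishes) or, more simply, a localization by stopping times $\tau_N=\inf\{s: |X_s|+|Y_s|\ge N\}$ followed by Fatou/monotone convergence as $N\to\infty$. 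With that standard repair both arguments deliver the same conclusion; the paper's version works off the shelf with only the $L^2$ bounds already in hand, while yours is shorter to state.
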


\begin{proof}\\
Let $\Delta_{t}=Y_{t}-X_{t}=\widetilde{L_{t}}-L_{t}+\int_{0}^{t}(b(Y_{s})-b(X_{s}))ds
+\int_{0}^{t}(\sigma(Y_{s})-\sigma(X_{s}))dB_{s}-\alpha \max\limits_{0\leq s\leq t}X_{s}.$\\
There exists a strictly decreasing sequence $\{a_{n}\}_{n=0}^{\infty} \subseteq (0,1]$ with $a_{0}=1$, $\lim_{n\rightarrow \infty}a_{n}=0$ and $\int_{a_{n}}^{a_{n-1}}\frac{1}{c^{2}u^{2}}du=n$, for every $n\geq1$. For each $n\geq1$, there exists a continuous function $\rho_{n}$ on $R$ with support in $(a_{n},a_{n-1})$ so that $0\leq \rho_{n}(x)\leq \frac{2}{nC^{2}x^{2}}$ holds for every $x>0$, and $\int_{a_{n}}^{a_{n-1}}\rho_{n}(x)dx=1$. Then the function
\begin{eqnarray*}
\phi_{n}(x)=\int_{0}^{|x|}\int_{0}^{y}\rho_{n}(u)dudy I_{(0,\infty)}(x), x\in R.
\end{eqnarray*}
is twice continuously differentiable, with
$0\leq \phi_{n}^{'}(x)\leq 1$ and $\lim\limits_{n\rightarrow\infty}\phi_{n}(x)=x^{+}$ for $x\in R.$\\
By the Ito rule:
\begin{eqnarray*}
\phi_{n}(\Delta_{t})
                 &=&\int_{0}^{t}\phi_{n}^{'}(\Delta_{s})d\widetilde{L_{s}}-\int_{0}^{t}\phi_{n}^{'}(\Delta_{s})dL_{s}
                      -\alpha\int_{0}^{t}\phi_{n}^{'}(\Delta_{s})d(\max_{0\leq u\leq s}X_{u})\\
                   &&+\int_{0}^{t}\phi_{n}^{'}(\Delta_{s})(b(Y_{s})-b(X_{s}))ds
                      +\int_{0}^{t}\phi_{n}^{'}(\Delta_{s})(\sigma(Y_{s})-\sigma(X_{s}))dB_{s}\\
                   && +\frac{1}{2}\int_{0}^{t}\phi_{n}^{''}(\Delta_{s})(\sigma(Y_{s})-\sigma(X_{s}))^{2}ds \\ \nonumber
                &\leq& \int_{0}^{t}\phi_{n}^{'}(\Delta_{s})d\widetilde{L_{s}}
                     +C\int_{0}^{t}\phi_{n}^{'}(\Delta_{s})I_{\{Y_{s}>X_{s}\}}|Y_{s}-X_{s}|ds \\
                   &&+\int_{0}^{t}\phi_{n}^{'}(\Delta_{s})(\sigma(Y_{s})-\sigma(X_{s}))dB_{s}\\
                    && +\frac{1}{2}\int_{0}^{t}\phi_{n}^{''}(\Delta_{s})(\sigma(Y_{s})-\sigma(X_{s}))^{2}ds
\end{eqnarray*}
Hence,
\begin{eqnarray*}
E[\phi_{n}(\Delta_{t})]
      &\leq & E\int_{0}^{t}\phi_{n}^{'}(\Delta_{s})\chi_{\{Y_{s}>0\}}d\widetilde{L_{s}}+CE\int_{0}^{t}(Y_{s}-X_{s})^{+}ds\\
      &&+\frac{1}{2}E\int_{0}^{t}\phi_{n}^{''}(\Delta_{s})(\sigma(Y_{s})-\sigma(X_{s}))^{2}ds \\
      &\leq& C\int_{0}^{t}E(Y_{s}-X_{s})^{+}ds+ \frac{t}{n}
\end{eqnarray*}
Letting $n\rightarrow \infty,$ we get $E\Delta_{t}^{+} \leq C\int_{0}^{t}E\Delta_{s}^{+}ds$.
By Gronwall Inequality, $E\Delta_{t}^{+}=0$. Hence $Y_{t}\leq X_{t}$ a.e.. $\Box$
\end{proof}

\begin{theorem}
Assume $0\leq \alpha < \frac{1}{2}$. Let $X_{t}$ be the solution to the equation (\ref{perturbed reflected eq}). Suppose that $\sigma(\cdot)$ and $b(\cdot)$ are Lipschitz continuous and $|\sigma(x)|>0$ for $x \in R$. Then for $t>0$, the law of $X_{t}$ is absolutely continuous with respect to Lebesgue measure.
\end{theorem}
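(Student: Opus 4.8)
The plan is to mirror the strategy of Theorem 3.2, invoking Theorem 2.1 so that it suffices to prove $\|DX_t\|_H^2 = \int_0^t (D_r X_t)^2\,dr > 0$ almost surely. First I would write down the Malliavin derivative of $X_t$ from the equation, namely
\begin{eqnarray*}
D_r X_t = \sigma(X_r) + \int_r^t D_r(\sigma(X_s))\,dB_s + \int_r^t D_r(b(X_s))\,ds + \alpha D_r\!\left(\max_{0\leq s\leq t} X_s\right) + D_r L_t, \quad r\leq t,
\end{eqnarray*}
which exists by Theorem 4.1. As in Theorem 3.2, the elementary inequality $(a+b)^2 \geq \tfrac12 a^2 - b^2$ applied with $a=\sigma(X_r)$ gives a lower bound for $(D_r X_t)^2$ in terms of $\tfrac12\sigma(X_r)^2$ minus the squares of the remaining terms, so the whole game is to show that, after averaging over the shrinking window $[t-\epsilon_n,t]$ and dividing by $\epsilon_n$, the three perturbation terms vanish while $\tfrac{1}{\epsilon}\int_{t-\epsilon}^t \sigma(X_r)^2\,dr \to \sigma(X_t)^2 > 0$ by continuity and the nondegeneracy hypothesis $|\sigma(x)|>0$.

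Next I would control each perturbation term on the window. The stochastic and drift integral terms $\int_r^t D_r(\sigma(X_s))\,dB_s$ and $\int_r^t D_r(b(X_s))\,ds$ are handled exactly as in Theorem 3.2: using the Lipschitz bounds $|D_r(\sigma(X_s))|\leq K|D_r X_s|$ and $|D_r(b(X_s))|\leq K|D_r X_s|$, Ito isometry, and the a priori estimate $\int_{s-\epsilon}^s E(D_r X_s)^2\,dr \leq C\epsilon$ (which should be re-derived via the same Picard-iteration and Gronwall-type argument $\phi_n(s)\leq C^*\epsilon + C''\int_{s-\epsilon}^s \phi_{n-1}(v)\,dv$, now carrying the extra reflection term), one gets that $\tfrac{1}{\epsilon_n}\int_{t-\epsilon_n}^t$ of their squares tends to $0$ almost surely along a subsequence. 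The genuinely new ingredient is the local term $D_r L_t$: here I would exploit the reflection structure, splitting the sample space according to whether $X_t=0$ or $X_t>0$. On the event $\{X_t>0\}$, since $L$ increases only when $X_s=0$, by continuity $L$ is constant on a neighbourhood of $t$, so $D_r L_t = D_r L_{t-\epsilon_m}$ on the window and the local property of the Malliavin derivative (Proposition 1.3.16) kills this term exactly as the max term is killed in Theorem 3.2.

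To dispatch the maximum term $\alpha D_r(\max_{0\leq s\leq t} X_s)$ I would reuse the decomposition $\Omega = \bigcup_m A_m \cup A$ from Theorem 3.2, where $A_m=\{\max_{0\leq s\leq t}X_s = \max_{0\leq s\leq t-\epsilon_m}X_s\}$ and $A=\{\max_{0\leq s\leq t}X_s = X_t\}$; on each $A_m$ the local property forces $\tfrac{1}{\epsilon_n}\int_{t-\epsilon_n}^t(D_r\max_s X_s)^2\,dr\to0$, and on $A$ one absorbs the $\alpha$-term into the left side, using $\alpha<1$ to keep the coefficient $(1-\alpha)^{-2}$ finite. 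The main obstacle I anticipate is the simultaneous handling of the two local terms on the right event: I must verify that $\{X_t=0\}$ (where $L$ may be genuinely active at $t$) does not coincide with a degenerate scenario, and reconcile the two events $A=\{\max X_s=X_t\}$ and $\{X_t=0\}$. Indeed, by the comparison Lemma 4.2, $X_t \geq Y_t$ where $Y_t$ is an ordinary reflected diffusion whose law is known to be absolutely continuous, so $P(X_t=0)\leq P(Y_t=0)=0$ for $t>0$; this is precisely why Lemma 4.2 was proved, and it lets me assume $X_t>0$ almost surely, making $L_t$ locally constant near $t$ and closing the argument. The delicate bookkeeping is ensuring the subsequence $\epsilon_n$ can be chosen to make all three perturbation terms vanish simultaneously almost surely, which follows by extracting a single almost-surely convergent subsequence from the $L^1$ convergence of the averaged quantities.
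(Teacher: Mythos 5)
Your proposal is correct and reaches the paper's conclusion, but it handles the reflection term by a genuinely different route. The paper represents $L_t$ through the Skorohod formula as a running maximum, $L_t=\max_{0\leq s\leq t}V_s$ with $V_s=\bigl(-(\int_0^s\sigma(X_u)dB_u+\int_0^s b(X_u)du+\alpha\max_{0\leq u\leq s}X_u)\bigr)\vee 0$, and then runs a second ``time of the maximum'' decomposition $\Omega=\bigcup_nB_n\cup B$ with $B_n=\{\max_{s\leq t}V_s=\max_{s\leq t-\epsilon_n}V_s\}$ in parallel with $\bigcup_mA_m\cup A$; this produces four cases, of which $A_m\cap B$ is settled by the comparison Lemma 4.2 together with the absolute continuity of the ordinary reflected diffusion (so $P(X_t=0)=0$), and the degenerate sub-case of $A\cap B$ is excluded by observing that $X\equiv 0$ on $[0,t]$ would force $-\int_0^s\sigma(X_u)dB_u$ to coincide with a process of bounded variation, which is impossible. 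You instead invoke the support property $\int_0^t\chi_{\{X_s=0\}}dL_s=L_t$: on $\{X_t>0\}$ continuity gives $X>0$ on some $[t-\delta,t]$, hence $L_t=L_{t-\epsilon_m}$ for $m$ large, and since $L_{t-\epsilon_m}$ is $\mathcal{F}_{t-\epsilon_m}$-measurable, the local property makes $\int_{t-\epsilon_l}^t(D_rL_t)^2dr$ vanish for $l>m$; the residual event $\{X_t=0\}$ is null by the same comparison argument. Your route collapses the paper's four cases into essentially two, dispenses with the bounded-variation contradiction, and is arguably more transparent; its only extra requirements are that $L_{t-\epsilon_m}\in\mathbb{D}^{1,2}$ (immediate from Theorem 4.1 and the equation) and that $P(X_t=0)=0$ carries the entire degenerate set rather than a single sub-case. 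Both arguments equally compress, and you at least flag, the need to re-derive the window estimate $\int_{s-\epsilon}^sE(D_rX_s)^2dr\leq C\epsilon$ for the reflected iteration, which requires a version of Lemma 4.1 localized to the restricted norm $\int_{s-\epsilon}^s(\cdot)^2dr$.
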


\begin{proof}
It is sufficient to prove $||DX_{t}||_{H}^{2}>0$ a.s. according to Theorem 2.1. \\
Now,$$
X_{t}=\int_{0}^{t}\sigma(X_{s})dB_{s}+\int_{0}^{t}b(X_{s})ds+\alpha \max_{0\leq s\leq t}X_{s}+L_{t},$$
Let$$
V_{s}={-(\int_{0}^{s}\sigma(X_{u})dB_{u}+\int_{0}^{s}b(X_{u})du+\alpha \max_{0\leq u\leq s}X_{u})\vee 0}.$$
Then, by reflection principle,
\begin{eqnarray*}
L_{t}=\max_{0\leq s\leq t}[{-(\int_{0}^{s}\sigma(X_{u})dB_{u}+\int_{0}^{s}b(X_{u})du+\alpha \max_{0\leq u\leq s}X_{u})\vee 0}]=\max_{0\leq s \leq t}V_{s},\\
D_{r}X_{t}=\sigma(X_{r})+\int_{r}^{t}D_{r}(\sigma(X_{s}))dB_{s}+\int_{r}^{t}D_{r}(b(X_{s}))ds+\alpha D_{r}(\max_{0\leq s \leq t}X_{s})+D_{r}(\max_{0\leq s\leq t}V_{s}).\\
(D_{r}X_{t})^{2}\geq \frac{1}{2}\sigma(X_{r})^{2}-[\int_{r}^{t}D_{r}(\sigma(X_{s}))dB_{s}+\int_{r}^{t}D_{r}(b(X_{s}))ds+\alpha D_{r}(\max_{0\leq s \leq t}X_{s})+D_{r}(\max_{0\leq s\leq t}V_{s})]^{2}
\end{eqnarray*}
Similar as in Section 3, we have
\begin{eqnarray}  \nonumber
  \lim_{\epsilon \rightarrow 0}\frac{1}{\epsilon}E\{\int_{t-\epsilon}^{t}([\int_{r}^{t}D_{r}(\sigma(X_{s}))dB_{s}]^{2}+[\int_{r}^{t}D_{r}(b(X_{s}))ds]^{2})dr\}=0.\\ \nonumber
\end{eqnarray}
Hence, there exists $\epsilon_{n} \downarrow 0,$
such that
\begin{eqnarray}
 \lim_{\epsilon_{n}\rightarrow 0}\frac{1}{\epsilon_{n}}\int_{t-\epsilon_{n}}^{t}([\int_{r}^{t}D_{r}(\sigma(X_{s}))dB_{s}]^{2}+[\int_{r}^{t}D_{r}(b(X_{s}))ds]^{2})dr=0
\ a.s. . \label{con2}
\end{eqnarray}
Let
\begin{eqnarray*}
A_{n}=\{\omega:\max_{0\leq s\leq t}X_{s}=\max_{0\leq s\leq t-\epsilon_{n}}X_{s}\},\\
A=\{\omega:\max_{0\leq s\leq t}X_{s}=X_{t}\}.
\end{eqnarray*}
Then,
\begin{equation}
\Omega=\cup_{m=1}^{\infty}A_{m}\cup A.
\end{equation}
Let
\begin{eqnarray*}
B_{n}=\{\omega: \max_{0\leq s\leq t}V_{s}=\max_{0\leq s \leq t-\epsilon_{n}}V_{s}\},\\
B=\{\omega:\max_{0\leq s\leq t}V_{s}=V_{t}\}.
\end{eqnarray*}
We have,
\begin{equation}
\Omega=\cup_{n=1}^{\infty}B_{n}\cup B.
\end{equation}
Firstly,
if $\omega \in A_{m}\bigcap B_{n},$
for $l>m,n$, we have \\
\begin{eqnarray*}
\int_{t-\epsilon_{l}}^{t}(D_{r}(\max_{0\leq s\leq t-\epsilon_{m}}X_{s}))^{2}dr=0, \\
\int_{t-\epsilon_{l}}^{t}(D_{r}(\max_{0\leq s \leq t-\epsilon_{n}}V_{s}))^{2}dr=0.
\end{eqnarray*}
This gives
\begin{eqnarray*}
\lim_{l\rightarrow\infty}\frac{1}{\epsilon_{l}}\int_{t-\epsilon_{l}}^{t}\alpha^{2}(D_{r}\max_{0\leq s\leq t}X_{s})^{2}dr=0,\
\lim_{l\rightarrow\infty}\frac{1}{\epsilon_{l}}\int_{t-\epsilon_{l}}^{t}(D_{r}(\max_{0\leq s\leq t}V_{s}))^{2}dr=0,
\end{eqnarray*}
a.e. on $A_{m}\cap B_{n}.$\\
Hence,
\begin{eqnarray}
\lim_{l\rightarrow \infty}\frac{1}{\epsilon_{l}}\int_{t-\epsilon_{l}}^{t}(D_{r}(X_{t}))^{2}dr
\geq \frac{1}{2}\sigma(X_{t})^{2}>0,
\end{eqnarray}
on $A_{m}\cap B_{n}.$\\
Secondly,
if $\omega \in A_{m}\bigcap B$, for fixed $m\geq 1$,
\begin{eqnarray*}
X_{t}=\int_{0}^{t}\sigma(X_{s})dB_{s}+\int_{0}^{t}b(X_{s})ds+\alpha \max_{0\leq s\leq t}X_{s}+[-(\int_{0}^{t}\sigma(X_{s})dB_{s}+\int_{0}^{t}b(X_{s})ds+\alpha\max_{0\leq s\leq t}X_{s})\vee 0].
\end{eqnarray*}
If $\int_{0}^{t}\sigma(X_{s})dB_{s}+\int_{0}^{t}b(X_{s})ds+\alpha \max \limits_{0\leq s\leq t}X_{s} >0,$ then $X_{t}=\int_{0}^{t}\sigma(X_{s})dB_{s}+\int_{0}^{t}b(X_{s})ds+\alpha \max\limits_{0\leq s\leq t}X_{s}$. In this case, we can see from the proof in Section 3 that $||DX_{t}||_{H}^{2}>0$.\\
If $\int_{0}^{t}\sigma(X_{s})dB_{s}+\int_{0}^{t}b(X_{s})ds+\alpha \max \limits_{0\leq s\leq t}X_{s}\leq 0,$ then $X_{t}=0.$\\
But $\{X_{t}=0\}$ is an event with probability zero. Indeed, according to Lemma 4.2, $0\leq Y_{t}\leq X_{t}$.
According to Proposition 4.1 in \cite{LNS}, the law of $Y_{t}$ is absolutely continuous with respect to Lesbegue measure, and then we have $P(Y_{t}=0)=0$. Therefore, $P(X_{t}=0)\leq P(Y_{t}=0)=0$.\\
Thirdly,
if $\omega \in A\bigcap B_{n}$, for fixed $n\geq1$,
\begin{eqnarray*}
D_{r}X_{t}=\sigma(X_{r})+\int_{r}^{t}D_{r}(\sigma(X_{s}))dB_{s}+\int_{r}^{t}D_{r}(b(X_{s}))ds+\alpha D_{r}(X_{t})+D_{r}(\max_{0\leq s\leq t-\epsilon_{n}}V_{s}).
\end{eqnarray*}
Hence,
\begin{eqnarray*}
(1-\alpha)D_{r}X_{t}=\sigma(X_{r})+\int_{r}^{t}D_{r}(\sigma(X_{s}))dB_{s}+\int_{r}^{t}D_{r}(b(X_{s}))ds
+D_{r}(\max_{0\leq s\leq t-\epsilon_{n}}V_{s}).
\end{eqnarray*}
Thus, for $l>n$,
\begin{eqnarray*}
\frac{1}{\epsilon_{l}}\int_{t-\epsilon_{l}}^{t}(1-\alpha)^{2}(D_{r}X_{t})^{2}dr
   &\geq& \frac{1}{2}\sigma(X_{t})^{2}-\frac{3}{\epsilon_{l}}\int_{t-\epsilon_{l}}^{t}[\int_{r}^{t}D_{r}(\sigma(X_{s}))dB_{s}]^{2}dr\\
   &&-\frac{3}{\epsilon_{l}}\int_{t-\epsilon_{l}}^{t}[\int_{r}^{t}D_{r}(b(X_{s}))ds]^{2}dr
     -\frac{3}{\epsilon_{l}}\int_{t-\epsilon_{l}}^{t}[D_{r}(\max_{0\leq s\leq t-\epsilon_{n}}V_{s})]^{2}dr.
\end{eqnarray*}
This implies,
\begin{eqnarray}
\lim_{l\rightarrow\infty}\frac{1}{\epsilon_{l}}\int_{t-\epsilon_{l}}^{t}(1-\alpha)^{2}(D_{r}X_{t})^{2}dr
\geq \frac{1}{2}\sigma(X_{t})^{2}>0 \ \  \mbox{    on a.e. } A\cap B_{n}.
\end{eqnarray}
Finally, let
$\omega \in A\bigcap B.$
Then
\begin{eqnarray}
X_{t}=\int_{0}^{t}\sigma(X_{s})dB_{s}+\int_{0}^{t}b(X_{s})ds+\alpha X_{t}+L_{t},\\
L_{t}=-(\int_{0}^{t}\sigma(X_{s})dB_{s}+\int_{0}^{t}b(X_{s})ds+\alpha X_{t})\vee 0.
\end{eqnarray}
If $\int_{0}^{t}\sigma(X_{s})dB_{s}+\int_{0}^{t}b(X_{s})ds+\alpha X_{t}\geq 0$, then
$L_{t}=0$, and
$X_{t}=\int_{0}^{t}\sigma(X_{s})dB_{s}+\int_{0}^{t}b(X_{s})ds+\alpha X_{t}.$
In this case we see that $||DX_{t}||_{H}^{2}>0$ from the proof in section 3.\\
If $\int_{0}^{t}\sigma(X_{s})dB_{s}+\int_{0}^{t}b(X_{s})ds+\alpha X_{t}<0$, then
$L_{t}=-(\int_{0}^{t}\sigma(X_{s})dB_{s}+\int_{0}^{t}b(X_{s})ds+\alpha X_{t}),$ and $X_{t}=0$.
But $X_{s}\leq X_{t}$ for $0\leq s\leq t$ on $A$. Therefore we deduce that $X_{s}=0$, for $ 0\leq s\leq t$.\\
Note that
\begin{equation}
X_{s}=\int_{0}^{s}\sigma(X_{u})dB_{u}+\int_{0}^{s}b(X_{u})du+\alpha X_{s}+L_{s}.
\end{equation}
Thus we have
\begin{equation}
-\int_{0}^{s}\sigma(X_{u})dB_{u}=\max_{0\leq u\leq s}\{-(\int_{0}^{u}\sigma(X_{v})dB_{v}+\int_{0}^{u}b(X_{v})dv+\alpha X_{u})\vee 0\}+\int_{0}^{s}b(X_{u})du,\ \ s\leq t. \label{contradictory}
\end{equation}
Notice that the right side is a process of bounded variation, so the equation (\ref{contradictory}) is not possible.
Combining all the cases, we get
$||DX_{t}||_{H}^{2}>0$. a.s.\ $\Box$
\end{proof}

\end{document}